\theoremstyle{plain}
\newtheorem{theorem}{Theorem}[section]
\newtheorem{cor}[theorem]{Corollary}
\newtheorem{def-thm}[theorem]{Definition-Theorem}
\newtheorem{lemma}[theorem]{Lemma}
\theoremstyle{definition}
\def\min{\mathop{\mathrm{min}}}
\begin{document}
\title[Nevanlinna-type theory based on heat diffusion]{Nevanlinna-type theory based on heat diffusion}
\author[X.J. Dong]
{Xianjing Dong}

\address{School of Mathematical Sciences \\ Qufu Normal University \\ Qufu, 273165,  P. R. China}
\email{xjdong@amss.ac.cn}

  %\thanks{This work was partially supported by the
%NSFC (No. 11301260,11101201), the NSF of Jiangxi (No.
%20132BAB211003) and the YFED of Jiangxi (No. GJJ13078) of China.}

\subjclass[2010]{30D35, 32H30.} \keywords{Nevanlinna theory;  second main theorem;  defect relation;  heat diffusion;  Ricci curvature.}
%\thanks{The author was supported in part by the Simon Foundation (Grant No. 531604)}
\date{}
\maketitle \thispagestyle{empty} \setcounter{page}{1}

\begin{abstract}
\noindent We obtain  an analogue  of  Nevanlinna theory of holomorphic mappings from a  complete and stochastically complete K\"ahler manifold into  a complex projective manifold. 
When certain  curvature conditions are imposed, the Nevanlinna-type defect relation based on heat diffusion is derived.
\end{abstract}

\vskip\baselineskip

\setlength\arraycolsep{2pt}
\medskip

\section{Introduction}

 In 2010, Atsuji \cite{at2}  introduced the notions of the so-called Nevanlinna-type functions $\tilde{T}_x(t),$ $\tilde{N}_x(t,a)$ and $\tilde{m}_x(t,a)$ of meromorphic functions on a K\"ahler manifold based on heat diffusion. Using the  approaches and techniques from Brownian motion theory (see, e.g., \cite{bass, itoo, rev}), 
   Atsuji obtained   an analogue  of  the Second Main Theorem
 in Nevanlinna theory:
 \begin{theorem}[Atsuji, \cite{at2}]\label{ah} Let $f$ be a nonconstant meromorphic function on a complete and stochastically complete K\"ahler manifold $M.$ Let  $a_1,\cdots,a_q$ be distinct points in $\mathbb P^1(\mathbb C).$ Assume that $\tilde{T}_x(t)<\infty$ as $0\leq t<\infty,$  $\tilde{T}_x(t)\rightarrow\infty$ as $t\rightarrow\infty$ and $|\tilde{N}_x(t,{\rm{Ric}})|<\infty$ as $0\leq t<\infty.$ Then
 $$\sum_{j=1}^q\tilde{m}_x(t,a_j)+\tilde{N}_1(t,x)\leq 2\tilde{T}_x(t)+2\tilde{N}_x(t,{\rm{Ric}})+O\big{(}\log^+\tilde{T}_x(t)\big{)}+O(1)$$
 holds  for $t\geq0$ outside a set of finite Lebesgue measure. 
 \end{theorem}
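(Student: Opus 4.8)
The plan is to transplant Nevanlinna's classical method of the singular metric to the stochastic setting, with Brownian motion $X_t$ on $M$ replacing the radial exhaustion and Dynkin's formula replacing the Jensen--Green formula. As a first step I would establish the First Main Theorem in this framework: applying It\^o's formula to the proximity potential $\log\|f,a_j\|$ along $X_t$ started at $x$ and taking expectations — legitimate because stochastic completeness makes the heat semigroup conservative and the energy hypothesis $\tilde T_x(t)<\infty$ controls the local-martingale part — should yield $\tilde T_x(t)=\tilde m_x(t,a_j)+\tilde N_x(t,a_j)+O(1)$. In particular $\tilde m_x(t,a_j)\le \tilde T_x(t)+O(1)$, so the real task is to bound the \emph{sum} $\sum_{j=1}^q\tilde m_x(t,a_j)$ from above by $2\tilde T_x(t)$ together with the correction terms.

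The heart of the argument is the choice of a singular volume form on $\mathbb P^1(\mathbb C)$,
$$\Psi=\frac{\omega}{\prod_{j=1}^q\|s,a_j\|^2\big(\log\|s,a_j\|^2\big)^2},$$
where $\omega$ is the Fubini--Study form and $\|s,a_j\|$ is the chordal distance from $s$ to $a_j$. This form has finite total mass but its curvature blows up logarithmically at each $a_j$, so pulling it back by $f$ concentrates mass exactly where $f$ approaches the target points, which is what generates the proximity sum. I would then compute $\Delta_M\log(\text{density of }f^*\Psi)$ and read off three contributions: the pullback of the Fubini--Study curvature produces $2\tilde T_x(t)$; the zeros of $df$, i.e.\ the ramification locus, produce the nonnegative counting term $\tilde N_1(t,x)$; and, because $M$ is curved, commuting $\Delta_M$ past the holomorphic differential forces in the Ricci curvature of $M$, producing $2\tilde N_x(t,{\rm{Ric}})$.

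Feeding this identity into Dynkin's formula, the essential estimate is the stochastic analogue of the Lemma on the Logarithmic Derivative. Applying Jensen's inequality to the concave function $\log$ against the heat distribution $p(t,x,\cdot)$ converts $\mathbb E_x[\log(\cdots)]$ into $\log\mathbb E_x[\cdots]$, and the inner expectation is comparable to $\frac{d}{dt}\tilde T_x(t)$; this is precisely what yields the $O\big(\log\tilde T_x(t)\big)$ error, once one also uses $\tilde T_x(t)\to\infty$ to ensure this term is genuinely small. Since a bound on a derivative of this kind cannot hold for every $t$, a Borel-type calculus lemma on the growth of $\tilde T_x(t)$ is invoked to discard a $t$-set of finite Lebesgue measure, giving the stated exceptional set.

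I expect the main obstacle to be the rigorous justification of this logarithmic-derivative step in the probabilistic setting: controlling the expectation of the logarithm of the pulled-back density, showing that the stochastic-integral (martingale) terms in It\^o's formula have vanishing expectation under only stochastic completeness and the energy hypothesis, and extracting the sharp $O\big(\log\tilde T_x(t)\big)$ rate via Jensen together with the calculus lemma. The curvature bookkeeping that isolates exactly the term $2\tilde N_x(t,{\rm{Ric}})$ from the Weitzenb\"ock-type commutation is likewise delicate, and it is here that the hypothesis $|\tilde N_x(t,{\rm{Ric}})|<\infty$ is needed to keep the Ricci contribution finite.
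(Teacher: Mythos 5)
The paper does not actually prove Theorem \ref{ah}: it is quoted from Atsuji \cite{at2} as a known result, so there is no in-paper proof to measure your attempt against. That said, your outline is the standard Carlson--Griffiths singular-metric method transplanted to the heat-diffusion setting, and it is essentially the strategy this paper uses to prove its generalization, Theorem \ref{nonpositive}: a First Main Theorem via Dynkin's formula (Theorem \ref{first}), a singular volume form on the target whose logarithmic Laplacian decomposes into characteristic, ramification and Ricci contributions (inequality (\ref{5q})), and a Jensen-plus-calculus-lemma step producing the $O(\log\tilde T)$ error outside a set of finite Lebesgue measure (Lemma \ref{calculus}). Two differences are worth recording. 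First, on $\mathbb P^1(\mathbb C)$ your form $\Psi$ genuinely needs the $(\log\|s,a_j\|^2)^2$ factors, since $\|s,a_j\|^{-2}$ alone is not integrable against $\omega_{FS}$ in complex dimension one; finite total mass of $\Psi$ then yields $\int_{\mathbb P^1}\tilde N(t,\cdot)\,\Psi\le\tilde T+O(1)$ by a Crofton-type computation, which is exactly the mechanism of the paper's Lemma \ref{oo12}. The paper's higher-dimensional argument instead takes $\Phi=\Omega/\prod\|s_j\|^2$ without log factors and controls $\mathbb E_o[\log\xi(X_t)]$ through a partition of unity and the Logarithmic Derivative Lemma (Theorem \ref{log}) applied to local coordinate functions; for the $\mathbb P^1$ target your route is the cleaner one. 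Second, the delicate points are the ones you flagged but did not resolve: $\log\|f,a_j\|^{-1}$ is singular on $f^{-1}(a_j)$, so It\^o/Dynkin cannot be applied directly --- one must truncate at the stopping time $T_\lambda$ and identify the counting function with $\lim_{\lambda\to\infty}\lambda\,\mathbb P_o\bigl(\sup_{0\le s\le t}\log\|f(X_s),a_j\|^{-1}>\lambda\bigr)$ via the local-martingale tail estimates of \cite{worthy}, as in the proof of Theorem \ref{first}. As a blueprint your proposal is sound; those justifications are where the actual work lies.
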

To  see how  Brownian motion is applied to the Nevanlinna theory, we refer 
the reader to  \cite{att, at,at1,at2,atsuji} and  refer also to  \cite{carne, Dong}.
  
 In this  paper, we shall develop Atsuji's techniques. In doing so, first of all, we extend the notions of the so-called Nevanlinna-type functions (see Section 2.1). As a generalization of Theorem \ref{ah}, we prove  an analogue of  the Second Main Theorem for holomorphic mappings into complex projective manifolds (see Theorem \ref{nonpositive} below). 
 Furthermore, we also discuss  some defect relations as  analogies  in Nevanlinna theory. 

 In our statements of the following theorems,  $M$ is a complete and stochastically complete K\"ahler manifold.   We first obtain the following logarithmic derivative lemma:
\begin{theorem}\label{log}  Let
$\psi$ be a nonconstant meromorphic function on $M$ such that $\tilde{T}_\psi(t,\omega_{FS})<\infty$ as $0\leq t<\infty.$ Then 
for any $\delta>0,$ there exists a set
$E_{\delta}\subseteq [0, \infty)$ of finite Lebesgue measure such that
\begin{eqnarray*}
% \nonumber to remove numbering (before each equation)
   \tilde{m}\left(t,\frac{\|\nabla_M\psi\|}{|\psi|}\right)&\leq& \frac{3+\delta}{2}\log^+ \tilde{T}_\psi(t,\omega_{FS})+O(1)
\end{eqnarray*}
 holds for  $t\not\in E_{\delta}.$ 
 \end{theorem}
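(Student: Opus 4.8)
The plan is to mimic the classical proof of the Logarithmic Derivative Lemma, replacing boundary integrals over circles by heat-diffusion averages against the transition density of the Brownian motion $X_t$ on $M$ generated by $\tfrac12\Delta$, and replacing the Borel growth lemma on $(0,\infty)$ by a calculus lemma for the increasing function $t\mapsto\tilde T(t,\psi)$.

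First I would reduce the proximity term by concavity. Since $M$ is stochastically complete, the transition density $p(t,x,\cdot)\,dV$ is a probability measure, so Jensen's inequality applies to the concave function $\log$. Writing $\tilde m(t,\|\nabla_M\psi\|/|\psi|)$ as the heat-diffusion average of $\log^+(\|\nabla_M\psi\|/|\psi|)$ and using $\log^+\xi\le\tfrac12\log(1+\xi^2)$, I obtain
$$\tilde m\left(t,\frac{\|\nabla_M\psi\|}{|\psi|}\right)\le \frac12\log^+\mathbb E_x\left[\frac{\|\nabla_M\psi\|^2}{|\psi|^2}(X_t)\right]+O(1).$$
Thus everything is reduced to estimating the single heat-diffusion average $\mathbb E_x[\|\nabla_M\psi\|^2/|\psi|^2\,(X_t)]$ by a fixed power of $\tilde T(t,\psi)$.

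Second, I would convert this average into a derivative of a heat integral via Itô/Dynkin. The Kähler identity gives, away from the zeros and poles of $\psi$, the harmonicity of $\log|\psi|^2$ and the pointwise relation $\|\nabla_M\psi\|^2/|\psi|^2=\tfrac12\|\nabla_M\log|\psi|^2\|^2$; similarly $\tfrac14\Delta\log(1+|\psi|^2)=\|\nabla_M\psi\|^2/(1+|\psi|^2)^2$ is the trace of the pullback $\psi^*\omega_{FS}$, whose heat average is, up to a constant, $\tilde T'(t,\psi)$. Applying Dynkin's formula to $u=\log|\psi|^2$ and to $u^2$, the energy identity $\tfrac12\Delta(u^2)=u\Delta u+\|\nabla_M u\|^2$ yields $\mathbb E_x[\|\nabla_M u\|^2(X_t)]=\tfrac{d}{dt}\mathbb E_x[u^2(X_t)]-\mathbb E_x[u\Delta u(X_t)]$. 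Combining this with a Cauchy--Schwarz splitting $\|\nabla_M\psi\|^2/|\psi|^2=(\|\nabla_M\psi\|^2/(1+|\psi|^2)^2)(|\psi|+|\psi|^{-1})^2$ and the First Main Theorem (which bounds the relevant first moments of $\log|\psi|$ by $\tilde T(t,\psi)+O(1)$), I expect the key intermediate estimate
$$\mathbb E_x\left[\frac{\|\nabla_M\psi\|^2}{|\psi|^2}(X_t)\right]\le C\,\tilde T(t,\psi)^2\,\tilde T'(t,\psi)$$
to hold, up to the singular contributions carried by the counting terms. Finishing then uses the calculus lemma: for every $\delta>0$ there is a set $E_\delta\subset(1,\infty)$ of finite Lebesgue measure outside which $\tilde T'(t,\psi)\le \tilde T(t,\psi)^{1+\delta}$, this being the heat-diffusion analogue of the Borel growth lemma and using only that $\tilde T(t,\psi)$ is positive, increasing and finite for finite $t$. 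Inserting this and taking $\tfrac12\log$ produces $\log\tilde T(t,\psi)$ from the factor $\tilde T(t,\psi)^2$ and $\tfrac{1+\delta}{2}\log\tilde T(t,\psi)$ from the differentiated factor $\tilde T'(t,\psi)$, giving exactly the coefficient $1+\tfrac{1+\delta}{2}$, with all remaining constants absorbed into $O(1)$.

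The main obstacle will be the singular term $\mathbb E_x[u\Delta u(X_t)]$ and, more generally, the behaviour near the zero and pole divisor of $\psi$: there $\log|\psi|^2$ is unbounded and the clean harmonic identity degenerates, so the Dynkin formula must be justified on the complement of shrinking tubes around the divisor, with the boundary/local-time terms identified with the counting functions and shown to be harmless. Controlling these contributions, together with justifying the interchange of $\tfrac{d}{dt}$ and $\mathbb E_x$ (which relies on stochastic completeness of $M$ and on the hypothesis $\tilde T(t,\psi)<\infty$ for $0<t<\infty$), and ensuring that the exceptional set from the calculus lemma stays of finite measure, is where the real work lies.
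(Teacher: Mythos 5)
Your first reduction already breaks down: the expectation
$\mathbb E_x\left[\frac{\|\nabla_M\psi\|^2}{|\psi|^2}(X_t)\right]=\int_M p(t,x,y)\frac{\|\nabla_M\psi\|^2}{|\psi|^2}(y)\,dV(y)$
is $+\infty$ whenever $\psi$ has a zero or a pole, because near a simple zero the integrand behaves like $|z_1|^{-2}$, which is not locally integrable against the volume measure (in the normal slice, $\int_0^\epsilon r^{-2}\, r\,dr$ diverges). So the quantity you propose to estimate by $C\,\tilde T(t,\psi)^2\,\tilde T'(t,\psi)$ does not exist, and excising shrinking tubes around the divisor cannot recover it. The fix in the paper is not a limiting procedure but a different functional: one inserts the weight $(1+\log^2|\psi|)^{-1}$ \emph{before} applying Jensen, i.e.\ one works with $k=\frac{\|\nabla_M\psi\|^2}{|\psi|^2(1+\log^2|\psi|)}$, which is (up to the constant $2m\pi$) the trace of $\psi^*\Phi$ for the singular area form $\Phi=\frac{1}{|\zeta|^2(1+\log^2|\zeta|)}\frac{\sqrt{-1}}{4\pi^2}d\zeta\wedge d\overline\zeta$ of total mass $1$ on $\mathbb P^1(\mathbb C)$. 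Then Fubini and the First Main Theorem give $\mathbb E_o\big[\int_0^t k(X_s)ds\big]=2\pi\int_{\mathbb P^1(\mathbb C)}\tilde N(t,\psi,\zeta)\Phi\le 2\pi\tilde T(t,\psi)+O(1)$ (Lemma \ref{oo12}); the Calculus Lemma (your Borel-type lemma, which the paper proves exactly as you describe) then yields $\log^+\mathbb E_o[k(X_t)]\le(1+\delta)\log\tilde T(t,\psi)+O(1)$ off a set of finite measure, producing the $\frac{1+\delta}{2}$ after taking $\frac12\log$. The discarded factor $1+\log^2|\psi|$ is handled under the concave outer logarithm, where Jensen and the FMT give $\mathbb E_o[\log(1+\log^+|\psi|+\log^+|\psi|^{-1})]\le\log\tilde T(t,\psi)+O(1)$; this is the source of the extra $1$ in the coefficient.

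Your second step has an independent problem: the quantities you invoke, $\mathbb E_x[u^2(X_t)]$ with $u=\log|\psi|^2$ and the factor $(|\psi|+|\psi|^{-1})^2$, are second (respectively exponential) moments that the First Main Theorem does not control --- it only bounds the first moments $\mathbb E_x[\log^+|\psi|(X_t)]$ and $\mathbb E_x[\log^+|\psi|^{-1}(X_t)]$. So the claimed intermediate bound is unsupported even formally, and the exponent bookkeeping that yields $1+\frac{1+\delta}{2}$ is reverse-engineered rather than derived. The correct architecture keeps every unbounded function of $\log|\psi|$ inside a single outer logarithm so that only first moments are ever needed; the only place a time derivative enters is through the Calculus Lemma applied to the weighted, integrable density $k$.
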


The  following result is  the so-called Second Main Theorem:
 \begin{theorem}\label{nonpositive}
 Let $L$ be a positive line bundle over a  complex projective manifold $N$ with $\dim_{\mathbb C}N\leq \dim_{\mathbb C}M.$ 
    Let  $D\in|L|$ be  of simple normal
crossing type.  Let $f:M\rightarrow N$  be a differentiably non-degenerate
holomorphic mapping such that $\tilde{T}_f(t,L)<\infty$ and $|\tilde{T}(t,\mathscr{R}_M)|<\infty$ for $0\leq t<\infty.$
  Then  
  \begin{eqnarray*}
  % \nonumber to remove numbering (before each equation)
     && \tilde{T}_f(t,L)+\tilde{T}_f(t,K_N)+\tilde{T}(t,\mathscr{R}_M) \\
     &\leq& \tilde{N}_f(t,D)-\tilde N_{f', D}(t, 0)+O\big{(}\log^+ \tilde{T}_f(t,L)\big{)}+O(1)
  \end{eqnarray*}
  holds for  $t\geq0$  outside a set  of finite Lebesgue measure.
\end{theorem}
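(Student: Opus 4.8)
The plan is to transplant the Carlson--Griffiths construction into Atsuji's heat-diffusion framework and to feed the resulting ``Ricci/Jacobian'' term into the Logarithmic Derivative Lemma (Theorem~\ref{log}). Fix a smooth metric $h$ on $L$ with $c_1(L,h)>0$ and a smooth volume form $\Omega$ on $N$; the latter induces a metric on $K_N$ whose Chern form represents $c_1(K_N)$, so that $\tilde{T}_f(t,K_N)$ and $\tilde{T}_f(t,L)$ are the heat-diffusion characteristics of $f^*c_1(K_N,\cdot)$ and $f^*c_1(L,h)$. Writing $D=\sum_{j=1}^{k}D_j$ with $D_j=(s_j)$, I form the singular volume form
\[
\Phi=\frac{\Omega}{\prod_{j=1}^{k}\|s_j\|^2\big(\log\|s_j\|^2\big)^2},
\]
normalized so that $\log\|s_j\|^2\le-1$; the factors $\big(\log\|s_j\|^2\big)^2$ guarantee that $\Phi$ has finite total mass. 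Using the Poincar\'e--Lelong identity $dd^c\log\|s_j\|^2=[D_j]-c_1(L_j,h_j)$ one checks that, away from $D$, the form $-\mathrm{Ric}(\Phi)=dd^c\log\Phi$ represents $c_1(K_N)+c_1(L)$ up to a bounded $\log\log$-correction. Thus the first two terms on the left-hand side are precisely the characteristic of $-f^*\mathrm{Ric}(\Phi)$.

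Since $\dim_{\mathbb C}M=:m\ge n:=\dim_{\mathbb C}N$, I introduce the generalized Jacobian $\rho\ge0$ defined by $f^*\Phi\wedge\omega_M^{m-n}/(m-n)!=\rho\,\omega_M^m/m!$; it is not identically zero because $f$ is differentiably non-degenerate, and its ramification locus contributes a nonnegative term. Combining Poincar\'e--Lelong with $\tfrac12\Delta_M=\mathrm{tr}_{\omega_M}dd^c$ yields, in the distributional sense on $M$,
\[
\tfrac12\Delta_M\log\rho=R_{\mathrm{ram}}+\mathrm{tr}_{\omega_M}f^*\big(c_1(K_N)+c_1(L)\big)+\mathrm{tr}_{\omega_M}\mathscr{R}_M-L_D+\mathcal{E},
\]
where $R_{\mathrm{ram}}\ge0$ is the ramification density, $L_D$ is the occupation (local-time) contribution of the path on $f^{-1}(D)$, and $\mathcal{E}$ collects the bounded $\log\log$-corrections. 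Let $X_t$ be the Brownian motion on $M$ issued from the reference point $x$. Applying the It\^o--Dynkin formula to $\log\rho(X_t)$, taking expectations (the martingale part drops out because $M$ is stochastically complete), and accumulating in the time variable as in Atsuji's definition converts the three trace terms into $\tilde{T}_f(t,K_N)+\tilde{T}_f(t,L)+\tilde{T}(t,\mathscr{R}_M)$, and $L_D$ into the reduced counting $\overline{\tilde{N}}_f(t,D)$ (the weight $\big(\log\big)^2$ is exactly what truncates the full counting to the reduced divisor). Rearranging and discarding the nonnegative accumulated $R_{\mathrm{ram}}$ gives
\[
\tilde{T}_f(t,K_N)+\tilde{T}_f(t,L)+\tilde{T}(t,\mathscr{R}_M)\le\overline{\tilde{N}}_f(t,D)+\mathbb{E}_x\big[\log^+\rho(X_t)\big]+O(1).
\]

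The heart of the argument is the estimate of the surviving proximity term $\mathbb{E}_x[\log^+\rho(X_t)]$. Near $f^{-1}(D)$ the combination $|J_f|^2/\prod_j\|s_j\circ f\|^2$ entering $\rho$ is dominated by the logarithmic-derivative quantities $\|\nabla_M(s_j\circ f)\|^2/|s_j\circ f|^2$, so that
\[
\log^+\rho\le C\sum_{j=1}^{k}\left(\log^+\frac{\|\nabla_M(s_j\circ f)\|}{|s_j\circ f|}+\log\log\frac{1}{\|s_j\circ f\|}\right)+O(1).
\]
Applying Theorem~\ref{log} to each meromorphic function $\psi=s_j\circ f$ (read off in a local frame), together with $\tilde{T}(t,s_j\circ f)\le\tilde{T}_f(t,L)+O(1)$, bounds $\mathbb{E}_x[\log^+\rho(X_t)]$ by $O\big(\log\tilde{T}_f(t,L)\big)$ for $t>1$ outside a set $E_\delta$ of finite Lebesgue measure, which is exactly the asserted error term and yields the stated inequality.

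I expect the main obstacle to be the rigorous transition from the It\^o--Dynkin identity to a Nevanlinna inequality holding off a finite-measure set. One must justify the distributional Poincar\'e--Lelong identity across the occupation of $X_t$ on the analytic set $f^{-1}(D)$ so that $L_D$ genuinely accumulates to $\overline{\tilde{N}}_f(t,D)$; one must verify that the local martingale is a true martingale, where stochastic completeness of $M$ and the hypotheses $\tilde{T}_f(t,L)<\infty$ and $|\tilde{T}(t,\mathscr{R}_M)|<\infty$ for $0<t<\infty$ are precisely what guarantee the required integrability; and one must invoke the stochastic calculus lemma for the increasing function $\tilde{T}_f(t,L)$ to absorb the exceptional set $E_\delta$ produced by Theorem~\ref{log}. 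The differentiable non-degeneracy of $f$ enters exactly to ensure $\rho\not\equiv0$, so that $\log\rho$ and its logarithmic derivatives are defined almost everywhere and the ramification term carries a definite sign.
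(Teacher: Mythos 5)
Your overall architecture is the same as the paper's: pull back a Carlson--Griffiths singular volume form, take its generalized Jacobian against $\alpha^{m-n}$, run the curvature-current inequality through the Dynkin formula to produce the left-hand side minus $\overline{\tilde{N}}_f(t,D)$, and then bound the surviving proximity term $\mathbb{E}_o[\log^+\rho(X_t)]$ by the Logarithmic Derivative Lemma. However, there is a genuine gap at exactly the step that carries the weight of the proof. You propose to apply Theorem~\ref{log} to ``$\psi=s_j\circ f$ read off in a local frame.'' The local representative $\tilde{s}_j\circ f$ is only defined on $f^{-1}(U_\alpha)$ and depends on the trivialization; it is not a global meromorphic function on $M$, so Theorem~\ref{log} does not apply to it, and the quantity $\tilde{T}(t,s_j\circ f)$ that you want to dominate by $\tilde{T}_f(t,L)$ is not even defined. (The metric norm $\|s_j\circ f\|$ is global but is not meromorphic.) The paper resolves precisely this difficulty with the Ru--Wong covering argument: identify $N$ with a projective algebraic manifold, choose finitely many \emph{rational} functions $w_{\lambda 1},\dots,w_{\lambda n}$ on $N$ that are holomorphic coordinates on $U_\lambda$ and cut out $D$ locally, so that $f_{\lambda k}=w_{\lambda k}\circ f$ are globally defined meromorphic functions on $M$ to which Theorem~\ref{log} applies, with $\tilde{T}(t,f_{\lambda k})\le C\,\tilde{T}_f(t,L)+O(1)$ supplied by Corollary~\ref{49}. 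Without some such globalization your estimate of $\mathbb{E}_o[\log^+\rho(X_t)]$ does not go through.

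A second, smaller omission: your pointwise bound for $\log^+\rho$ accounts only for the singular factors $\prod_j\|s_j\circ f\|^{-2}$ and their logarithmic derivatives, but not for the smooth numerator $f^*\Omega$. After tracing against $\alpha^{m-1}$ this contributes a term comparable to $\varrho=\tfrac{1}{2m}e_{f^*c_1(L,h)}$, which is unbounded and must be handled separately; the paper controls $\log^+\mathbb{E}_o[\varrho(X_t)]$ by $(1+\delta)\log\tilde{T}_f(t,L)/m+O(1)$ via the Calculus Lemma (Lemma~\ref{calculus}), and this is where the exceptional set $E_\delta$ of finite measure actually comes from (in addition to the one produced by Theorem~\ref{log}). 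Finally, note that the paper's $\Phi=\Omega/\prod_j\|s_j\|^2$ carries no $(\log\|s_j\|^2)^2$ convergence factors; in the heat-diffusion setting one does not need $\Phi$ to have finite mass on $N$, and inserting those factors forces you to control the extra unbounded curvature terms $dd^c\log(\log\|s_j\|^2)^2$, a complication your sketch waves away as ``bounded $\log\log$-corrections'' when it is not pointwise bounded.
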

 In 1972s, Griffiths and co-authors (\cite{gri, gri1, gth})   devised an equi-distribution theory of  holomorphic mappings between algebraic varieties   intersecting simple normal crossing type divisors.
In our investigations, Theorem \ref{nonpositive} 
  considers an analogue of Griffiths'  equi-distribution 
 theory based on heat diffusion.
 
When $M$ has non-negative  Ricci curvature, we prove a defect relation: 
\begin{theorem}\label{zzzz} 
Let $L\rightarrow N$ be a positive line bundle over a  complex projective manifold $N$ with $\dim_{\mathbb C}N\leq \dim_{\mathbb C}M.$ 
    Let $D\in|L|$ be  of   simple normal
crossing type.
  Let $f:M\rightarrow N$  be a differentiably
non-degenerate holomorphic mapping satisfying
\begin{equation}\label{con2}
  \int_1^\infty e^{-\epsilon r^2}dr\int_{B_o(r)}e_{f^*c_1(L,h)}(x)dV(x)<\infty
\end{equation}
 for any $\epsilon>0.$
 Then
$$\tilde{\delta}_f(D)\leq
\overline{\left[\frac{c_1(K^*_N)}{c_1(L)}\right]}.$$
\end{theorem}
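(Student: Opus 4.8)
The plan is to obtain the defect relation as a formal consequence of the Second Main Theorem (Theorem \ref{nonpositive}), through the usual normalize-and-pass-to-the-limit argument of Nevanlinna theory, using the non-negativity of the Ricci scalar curvature of $M$ to delete the curvature term and condition \eqref{con2} to secure the hypotheses and kill the error. Recall the defect is
$$\tilde{\Theta}_f(D)=1-\limsup_{t\to\infty}\frac{\overline{\tilde{N}}_f(t,D)}{\tilde{T}_f(t,L)},$$
and that the right-hand bracket denotes the cohomological comparison constant
$$\overline{\left[\frac{c_1(K_N^*)}{c_1(L)}\right]}=\inf\left\{\frac{s}{u}:u,s\in\ZZ_{>0},\ u\,c_1(K_N^*)\le s\,c_1(L)\right\},$$
for which the First-Main-Theorem comparison gives $\tilde{T}_f(t,K_N^*)\le \frac{s}{u}\tilde{T}_f(t,L)+O(1)$ whenever $u\,c_1(K_N^*)\le s\,c_1(L)$.

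First I would check that condition \eqref{con2} supplies the hypotheses of Theorem \ref{nonpositive}: the Gaussian-weighted energy integral, finite for every $\epsilon>0$, forces $\tilde{T}_f(t,L)<\infty$ and $|\tilde{T}(t,\mathscr{R}_M)|<\infty$ for all finite $t$, while differentiable non-degeneracy of $f$ guarantees $\tilde{T}_f(t,L)\to\infty$ as $t\to\infty$. Granting this, I would feed the additivity $\tilde{T}_f(t,K_N)=-\tilde{T}_f(t,K_N^*)+O(1)$ into the conclusion of Theorem \ref{nonpositive} and move terms across, obtaining
$$\tilde{T}_f(t,L)-\overline{\tilde{N}}_f(t,D)\le \tilde{T}_f(t,K_N^*)-\tilde{T}(t,\mathscr{R}_M)+O\big(\log\tilde{T}_f(t,L)\big)+O(1),$$
valid for $t>1$ outside the finite-measure set $E_\delta$.

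Next I would divide by $\tilde{T}_f(t,L)>0$. Since $M$ has non-negative Ricci scalar curvature we have $\tilde{T}(t,\mathscr{R}_M)\ge 0$, so $-\tilde{T}(t,\mathscr{R}_M)/\tilde{T}_f(t,L)\le 0$ may be discarded, and because $\tilde{T}_f(t,L)\to\infty$ the quotient $\big(O(\log\tilde{T}_f(t,L))+O(1)\big)/\tilde{T}_f(t,L)\to 0$. This leaves, for $t\notin E_\delta$,
$$1-\frac{\overline{\tilde{N}}_f(t,D)}{\tilde{T}_f(t,L)}\le \frac{\tilde{T}_f(t,K_N^*)}{\tilde{T}_f(t,L)}+o(1).$$
Taking limits inferior, and using that the limit inferior over all $t$ is no larger than the one over the subset $\{t\notin E_\delta\}$, I would bound $\tilde{\Theta}_f(D)=\liminf_{t\to\infty}\big(1-\overline{\tilde{N}}_f(t,D)/\tilde{T}_f(t,L)\big)$ above by $\limsup_{t\to\infty}\tilde{T}_f(t,K_N^*)/\tilde{T}_f(t,L)$, which the comparison constant dominates, yielding the assertion.

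The delicate point is the exceptional set $E_\delta$: the Second Main Theorem only holds for $t$ off a set of finite Lebesgue measure, so the passage to the limit must exploit that $\liminf$ over $(1,\infty)$ is bounded above by $\liminf$ over its unbounded subset $\{t\notin E_\delta\}$ rather than taking limits carelessly. I expect the main obstacle, however, to be the two quantitative inputs that make the normalization legitimate — verifying from \eqref{con2} and non-degeneracy that $\tilde{T}_f(t,L)\to\infty$, so that the error terms are genuinely negligible, and establishing the First-Main-Theorem comparison $\tilde{T}_f(t,K_N^*)\le \frac{s}{u}\tilde{T}_f(t,L)+O(1)$ that identifies the limiting ratio with the purely cohomological bracket; the latter is where the positivity of $L$ enters, since ampleness is what lets a fixed multiple of $c_1(L)$ dominate $c_1(K_N^*)$.
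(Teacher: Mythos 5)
Your core argument is the paper's own: the paper first proves an intermediate defect relation (Theorem \ref{defect}) by dividing the Second Main Theorem by $\tilde{T}_f(t,L)$ and letting $t\to\infty$, and then obtains Theorem \ref{zzzz} by checking the hypotheses and observing that $s_M\geq 0$ gives $\mathscr{R}_M\geq 0$, so the term $-\underline{\left[\mathscr{R}_M/f^*c_1(L)\right]}$ can be dropped. You simply inline that intermediate step, and your treatment of the exceptional set $E_\delta$ (comparing the $\liminf$ over $(1,\infty)$ with the $\liminf$ over the unbounded complement of $E_\delta$) is if anything more careful than the paper's ``let $t\to\infty$.''

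The genuine gap is in your verification of the hypotheses. You assert that differentiable non-degeneracy of $f$ guarantees $\tilde{T}_f(t,L)\to\infty$, and that \eqref{con2} forces $|\tilde{T}(t,\mathscr{R}_M)|<\infty$; neither is true as stated. Since $\tilde{T}_f(t,L)=\frac{1}{2}\int_M g_t(o,x)\,e_{f^*c_1(L,h)}(x)\,dV(x)$ and on a non-parabolic manifold $g_t(o,x)$ increases to a finite Green function, the characteristic function can remain bounded even for a non-degenerate $f$; in the paper the divergence comes from the curvature hypothesis, namely $s_M\geq0$ (read there as $\mathscr{R}_M\geq0$, hence $R_M\geq0$) puts one in case (b) of Lemma \ref{condition}, which \emph{together with} non-degeneracy yields $\tilde{T}_f(t,L)\to\infty$. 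Likewise the finiteness $\tilde{T}_f(t,L)<\infty$ for finite $t$ uses not \eqref{con2} alone but \eqref{con2} combined with the Ricci lower bound (Lemma \ref{condition1}(c), via the Li--Yau heat kernel estimate), and \eqref{con2} is an energy condition on $f$ that says nothing about $\tilde{T}(t,\mathscr{R}_M)$ --- that term is non-negative here and sits on the favorable side of the inequality, so it is discarded rather than shown finite. In short, the hypothesis $s_M\geq0$ must do double duty in your argument (killing the Ricci term \emph{and} supplying both growth conditions on $\tilde{T}_f(t,L)$), and your proof needs to invoke it at those points as well.
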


\section{First Main Theorem}
\subsection{Dynkin Formula}~

 Let $M$ be a  Riemannian manifold with  
  Laplace-Beltrami operator $\Delta_M.$  A Brownian motion $X_t=(X_t)_{t\geq0}$ in $M$ is a heat diffusion  process  generated by $\Delta_M/2,$ with transition density function $p(t,x,y)$ being  the
minimal positive fundamental solution of  the following heat equation
  $$\frac{\partial}{\partial t}u(t,x)-\frac{1}{2}\Delta_{M}u(t,x)=0.$$
The parabolicity of $M$ means the recurrence of  Brownian motions in $M.$
 We say that  $M$ is  stochastically complete if
$$\int_M p(t,x,y)dV(x)=1$$
holds for all $x\in M.$  By Grigor'yan's criterion (see \cite{yan}),  $M$ is stochastically complete
if $R_M(x)\geq-cr^2(x)-c$ for some constant $c>0,$ where $R_M$ is the pointwise lower bound of  Ricci curvature defined  by
\begin{equation}\label{kappa}
 R_M(x)=\inf_{\xi\in T_xM,  \|\xi\|=1}{\rm{Ric}}_M(\xi, \xi).
\end{equation}
\ \ \ \  Let  $\mathbb P_x$ and  $\mathbb E_x$  be the law and expectation of $X_t$ started at $x$ respectively. The It\^o formula (see \cite{at,NN,itoo}) states  that
$$u(X_t)-u(x)=B\left(\int_0^t\|\nabla_M u\|^2(X_s)ds\right)+\frac{1}{2}\int_0^t\Delta_Mu(X_s)dt, \ \ \mathbb P_x-a.s.$$
for $u\in\mathscr{C}_\flat^2(M),$
where $B_t$ is the  standard  Brownian motion in $\mathbb R$ and $\nabla_M$ is the gradient operator on $M$.
Take expectation on both sides of the equality, it  follows the Dynkin formula
\begin{equation}\label{dynkin}
\mathbb E_x[u(X_t)]-u(x)=\frac{1}{2}\mathbb E_o\left[\int_0^t\Delta_Mu(X_s)ds\right]
\end{equation}
 provided that each term  makes sense.

\subsection{Nevanlinna-type functions}~

 Let $(M, g)$ be a  K\"ahler manifold of complex dimension $m,$ whose Laplace-Beltrami operator is denoted by $\Delta_M$  and  K\"ahler  form  is defined by 
 $$\alpha=\frac{\sqrt{-1}}{\pi}\sum_{i,j=1}^mg_{i\bar{j}}dz_i\wedge d\bar{z}_j.$$ 
\ \ \ \ Let $L$ be a holomorphic line bundle over a complex projective manifold $N.$
 We denote by $H^0(N,L)$  the  space of all holomorphic global sections of $L$ over $N,$ and by $|L|$ the
complete linear system of all effective divisors which are  zero divisors of the sections  in  $H^0(N,L).$ 
Moreover, we use the following standard notations
$$d=\partial+\bar{\partial}, \ \ d^c=\frac{\sqrt{-1}}{4\pi}(\bar{\partial}-\partial) \ \ \text{so that}  \ \ dd^c=\frac{\sqrt{-1}}{2\pi}\partial\bar{\partial}.$$
 \ \ \ \ 
 Let $X_t$ be a Brownian motion in $M$ started from a reference point $o\in M.$ 
  Let
$f: M\rightarrow N$ be a holomorphic mapping satisfying  $f(M)\not\subset {\rm{Supp}}D,$ where $D\in |L|$ is a given divisor. 
Equip $L$ with a Hermitian metric $h,$  there  defines the Chern form $c_1(L,h):=-dd^c\log h.$
It's trivial to check that $\Delta_M\log(h\circ f)$ is well  defined. Since $M$ is K\"ahlerian,  then 
\begin{equation}\label{bbzd}
 \Delta_M\log(h\circ f)=-4m\frac{f^*c_1(L,h)\wedge\alpha^{m-1}}{\alpha^m}. 
\end{equation}
\ \ \ \ Consider  a local trivialization covering  $(\{U_\alpha\},\{e_\alpha\})$ of  $(L,h).$ Taking $0\neq s\in H^0(N, L)$ and writing $s=\tilde{s}e_\alpha$  on $U_\alpha$ locally. 
 We also see  $\Delta_M\log|\tilde{s}\circ f|^2$ is well defined on $M.$ By a simple computation,  we obtain 
  \begin{eqnarray*}
  \Delta_M\log|\tilde{s}\circ f|^2&=&4m\frac{dd^c\log|\tilde{s}\circ f|^2\wedge\alpha^{m-1}}{\alpha^m}, \\
  \Delta_M\log \|s\circ f\|^2&=&\Delta_M\log(h\circ f)+\Delta_M\log|\tilde{s}\circ f|^2.
 \end{eqnarray*}
 
  For a   (1,1)-form $\zeta$ on $M,$ we  use the following  convenient symbols
$$e_{\zeta}=2m\frac{\zeta \wedge\alpha^{m-1}}{\alpha^m}.$$
According to (\ref{bbzd}), 
$$e_{f^*c_1(L,h)}=-\frac{1}{2}\Delta_M\log (h\circ f).$$

 Let $\omega$ be a (1,1)-form on $N.$ 
The \emph{characteristic function} of $f$ with respect to $\omega$  is defined by
$$ \tilde{T}_f(t,\omega)=\frac{1}{2}\mathbb E_o\left[\int_0^{t}e_{f^*\omega}(X_s)ds\right].$$
Define  $\tilde{T}_f(t, L):=\tilde{T}_f(t, c_1(L, h)),$ which is well defined 
up to a bounded term since the compactness of $N.$

Let $s_D$ be the canonical section determined by  $D.$ Suppose that $\|s_D\|<1,$ since the compactness of $N$.   The \emph{proximity function} of $f$ with respect to $D$ is defined by
$$\tilde{m}_f(t,D)=\mathbb E_o\left[\log\frac{1}{\|s_D\circ f(X_{t})\|}\right].$$

To define the counting function via Brownian motions, we need to assume that $M$ is stochastically complete.  The \emph{counting function} of $f$ with respect to $D$ is defined by  
\begin{equation}\label{prob}
  \tilde{N}_f(t,D)=\lim_{\lambda\rightarrow\infty}\lambda\mathbb P_o\left(\sup_{0\leq s\leq t}\log\frac{1}{\|s_D\circ f(X_s)\|}>\lambda\right).
\end{equation}

 In what follows, we assume that $L>0.$ 
 In addition, we also assume that  $f$ is differentiably non-degenerate, by which we mean that the Jacobian matrix of $f$  is of  full rank.
We first give  conditions for $\tilde{T}_f(t,L)<\infty$ as $0<t<\infty$ and  $\tilde{T}_f(t,L)\rightarrow\infty$ as $t\rightarrow\infty.$

\begin{lemma}\label{condition1} Let $R_M$ be defined by $(\ref{kappa}).$ Each  of the following conditions ensures that $\tilde{T}_f(t,L)<\infty$ as $0<t<\infty:$

$(a)$ $f$ has finite energy, i.e., $$\int_Me_{f^*c_1(L,h)}(x)dV(x)<\infty;$$

 $(b)$ the energy density function $e_{f^*c_1(L,h)}(x)$ is bounded$;$ 
 
  $(c)$ $R_M(x)\geq-k(r(x))$ for a nondecreasing function $k\geq0$ on $[0,\infty)$ with
  $k(r)/r^2\rightarrow0$ as $r\rightarrow\infty$ and $(\ref{con2})$ is assumed$;$ 
  
  $(d)$ $R_M(x)\geq-k$ for a constant $k\geq0$ with
  $$\int_1^\infty e^{-\epsilon r^2}\sup_{x\in B_o(r)}e_{f^*c_1(L,h)}(x)dr<\infty$$
  for any $\epsilon>0,$ where $B_o(r)$ denotes the geodesic ball centered at $o$ with radius $r$ in $M.$
\end{lemma}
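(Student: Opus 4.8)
The plan is to work throughout from the two equivalent expressions
$$\tilde{T}_f(t,L)=\frac{1}{2}\mathbb E_o\left[\int_0^{t}e_{f^*c_1(L,h)}(X_s)ds\right]=\frac{1}{2}\int_M g_t(o,x)e_{f^*c_1(L,h)}(x)dV(x),$$
and to show that under each hypothesis the right-hand side is finite for every fixed $t\in(0,\infty)$. Part $(b)$ is immediate: if $e_{f^*c_1(L,h)}\leq C$ on $M$, then $\tilde{T}_f(t,L)\leq \frac{C}{2}\mathbb E_o\big[\int_0^t ds\big]=\frac{C}{2}t<\infty$, using only that the total occupation time up to $t$ is $t$.

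For $(a)$ I would split $M=B_o(1)\cup(M\setminus B_o(1))$ in the integral representation. On $B_o(1)$ the density $e_{f^*c_1(L,h)}$ is bounded, since it is smooth on the compact closure $\overline{B_o(1)}$, while Fubini together with stochastic completeness gives $\int_{B_o(1)}g_t(o,x)dV(x)\leq\int_0^t\big(\int_M p(s,o,x)dV(x)\big)ds=t$; hence this piece is finite even though $g_t(o,\cdot)$ has an integrable singularity at $o$. On $M\setminus B_o(1)$ I would invoke a Gaussian off-diagonal upper bound for the heat kernel to estimate $g_t(o,x)$ uniformly for $d(o,x)\geq1$ by a constant $C(t)$, after which $\int_{M\setminus B_o(1)}g_t(o,x)e_{f^*c_1(L,h)}(x)dV(x)\leq C(t)\int_M e_{f^*c_1(L,h)}(x)dV(x)<\infty$ is exactly the finite-energy hypothesis. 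This off-diagonal bound is the same analytic input that drives $(c)$ and $(d)$.

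The substance is in $(c)$ and $(d)$, where the curvature lower bound is converted into Gaussian control of the diffusion. In case $(d)$, the uniform bound $R_M(x)\geq-k$ forces $\mathrm{Ric}_M\geq-(2m-1)k$, and a Laplacian-comparison/Kendall-type argument yields a Gaussian tail $\mathbb P_o(r(X_s)>r)\lesssim \exp(-r^2/(Cs))$ for $s\in(0,t]$. Writing $\phi(r):=\sup_{x\in B_o(r)}e_{f^*c_1(L,h)}(x)$ and using $e_{f^*c_1(L,h)}(X_s)\leq\phi(r(X_s))$, I would estimate
$$\tilde{T}_f(t,L)\leq\frac{1}{2}\int_0^t\mathbb E_o\big[\phi(r(X_s))\big]ds,\qquad \mathbb E_o\big[\phi(r(X_s))\big]=\int_0^\infty\phi(r)\,d\mathbb P_o(r(X_s)\leq r).$$
The Gaussian tail makes the effective weight comparable to $e^{-r^2/(Cs)}\leq e^{-r^2/(Ct)}$ for large $r$; choosing $\epsilon=1/(Ct)$, the $s$-uniform tail matches the hypothesis $\int_1^\infty e^{-\epsilon r^2}\phi(r)dr<\infty$, while the mass of $r(X_s)$ concentrates near $0$ as $s\to0$ so that the small-$s$ contribution stays bounded by $\phi(0)=e_{f^*c_1(L,h)}(o)$. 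Case $(c)$ runs in parallel but through the kernel representation: the non-uniform bound $R_M(x)\geq-k(r(x))$ with $k(r)/r^2\to0$ already guarantees stochastic completeness by Grigor'yan's criterion and, more importantly, a Gaussian-type upper bound $g_t(o,x)\lesssim \exp(-d(o,x)^2/(Ct))$ off a neighbourhood of $o$; decomposing $M$ into annuli $B_o(j+1)\setminus B_o(j)$ and summing turns $\int_M g_t(o,x)e_{f^*c_1(L,h)}(x)dV(x)$ into a series dominated by $\sum_j e^{-\epsilon j^2}\int_{B_o(j+1)}e_{f^*c_1(L,h)}dV$, which is comparable to the integral in $(\ref{con2})$.

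The main obstacle, and the step I would spend the most care on, is establishing the Gaussian heat-kernel/exit-time estimates under the precise curvature hypotheses: a uniform lower Ricci bound in $(d)$ is classical, but in $(c)$ the bound degrades like $k(r)$, so one must verify that $k(r)/r^2\to0$ still produces an upper bound carrying a genuine Gaussian factor $e^{-\epsilon r^2}$ rather than merely a sub-exponential one, since it is exactly this factor that is tuned against $(\ref{con2})$. A secondary technical point is integrability in $s$ near $0$: the on-diagonal blow-up of $p(s,o,x)$ as $s\to0$ must be absorbed by the concentration of the diffusion near $o$, so that $\mathbb E_o[\phi(r(X_s))]$ (respectively the annular sums) stay bounded as $s\to0$ and the outer integral $\int_0^t ds$ converges.
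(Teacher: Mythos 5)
Your overall strategy---reduce everything to the kernel representation $\tilde T_f(t,L)=\frac12\int_M g_t(o,x)e_{f^*c_1(L,h)}(x)dV(x)$ and control $g_t(o,\cdot)$ by Gaussian heat-kernel bounds---matches the paper where it matters: $(b)$ is disposed of exactly as you do, and the paper proves $(c)$ precisely by invoking the Li--Yau upper bound for $p(s,o,x)$, which your annular decomposition then tunes against (\ref{con2}); you also correctly isolate the delicate point there, namely that $k(r)=o(r^2)$ must still leave a genuine factor $e^{-\epsilon r^2}$ after the correction terms from the varying Ricci bound. For $(d)$ you take a genuinely different route, re-deriving Gaussian tails for the radial process $r(X_s)$ and integrating against $\phi(r)=\sup_{B_o(r)}e_{f^*c_1(L,h)}$. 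The paper instead reduces $(d)$ to $(c)$ in one line: a constant lower Ricci bound gives ${\rm Vol}(B_o(r))\le Ce^{Cr}$ by Bishop--Gromov, hence $\int_{B_o(r)}e_{f^*c_1(L,h)}dV\le Ce^{Cr}\sup_{B_o(r)}e_{f^*c_1(L,h)}$, so the hypothesis of $(d)$ implies (\ref{con2}) after shrinking $\epsilon$. That reduction needs no exit-time estimates and is worth adopting.

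The step that does not close as written is $(a)$. You bound the far part by asserting a pointwise off-diagonal bound $g_t(o,x)\le C(t)$ for $d(o,x)\ge 1$, calling it ``the same analytic input'' as in $(c)$ and $(d)$---but $(a)$ carries no curvature hypothesis, and on an arbitrary complete, stochastically complete K\"ahler manifold no such pointwise Gaussian bound is available (such bounds require a Faber--Krahn/on-diagonal input or a Ricci lower bound; the only free general fact is the integrated identity $\int_M g_t(o,x)dV(x)=t$, which together with $e_{f^*c_1(L,h)}\in L^1$ does not give integrability of the product). What $(a)$ actually requires is exactly the boundedness of $g_t(o,\cdot)$ off a neighbourhood of $o$, and that is the unproven step. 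To be fair, the paper offers no argument either, declaring $(a)$ and $(b)$ ``apparent''; if you can supply this bound from the standing assumptions, or record that $(a)$ is only used in conjunction with a curvature hypothesis, you would be improving on the source rather than merely reproducing its gap.
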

\begin{proof}
$(a)$ and $(b)$ are immediate. $(c)$ is proved using  the estimate of $p(t,o,x)$ due to Li-Yau \cite{Liyau}.
$(d)$ is confirmed by $(c),$ because the boundedness of Ricci curvature implies that ${\rm{Vol}}(B_o(r))$ has at most the exponential growth. The arguments here refer to  \cite{at2}, Proposition 6.
\end{proof}

\begin{lemma}\label{condition} Each of the following conditions ensures that $\tilde{T}_f(t,L)\rightarrow\infty$ as $t\rightarrow\infty:$
 
$(a)$  there exists no nonconstant bounded subharmonic functions on $M.$ In particular,  $M$ is parabolic$;$ 

 $(b)$ ${\rm{Ric}}_M\geq0.$
\end{lemma}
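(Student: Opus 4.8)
The plan is to reduce the entire statement to the divergence of a single Green-type potential, and then to treat the two hypotheses separately. Starting from the integral representation established in Section 2.1,
$$\tilde{T}_f(t,L)=\frac{1}{2}\int_M g_t(o,x)\,e_{f^*c_1(L,h)}(x)\,dV(x),\qquad g_t(o,x)=\int_0^t p(s,o,x)\,ds,$$
I note that $g_t(o,x)$ increases with $t$ to the (possibly infinite) Green kernel $g(o,x)=\int_0^\infty p(s,o,x)\,ds$, while the integrand is non-negative. Moreover, differentiable non-degeneracy of $f$ forces $f^*c_1(L,h)>0$ on a dense open set, so the trace $e_{f^*c_1(L,h)}\ge 0$ is not identically zero. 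Hence, by the monotone convergence theorem,
$$\lim_{t\to\infty}\tilde{T}_f(t,L)=\frac{1}{2}\int_M g(o,x)\,e_{f^*c_1(L,h)}(x)\,dV(x),$$
and it suffices to prove that this integral equals $+\infty$ under either hypothesis.

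For $(a)$, I would use that on a parabolic manifold the Green kernel is identically infinite, $g(o,x)\equiv+\infty$: the failure of this is equivalent to the existence of a positive nonconstant superharmonic function, and the absence of nonconstant bounded (pluri)subharmonic functions is the corresponding recurrence condition. With $g\equiv+\infty$ and $e_{f^*c_1(L,h)}$ positive on a set of positive measure, the potential above is $+\infty$, and we are done. (One may alternatively run the argument through the First Main Theorem, Theorem \ref{first}, since $\tilde{T}_f(t,L)\ge\tilde{N}_f(t,D)+O(1)$.)

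For $(b)$, I would invoke the Li--Yau heat-kernel estimate \cite{Liyau}, available because ${\rm{Ric}}_M\ge0$: $p(s,o,x)\ge C\,{\rm{Vol}}(B_o(\sqrt s))^{-1}\exp(-d(o,x)^2/Cs)$. Integrating in $s$ yields the comparison $g(o,x)\gtrsim\int_{d(o,x)}^\infty r\,{\rm{Vol}}(B_o(r))^{-1}\,dr$. If $\int^\infty r\,{\rm{Vol}}(B_o(r))^{-1}\,dr=\infty$, the kernel is again infinite and we fall back on case $(a)$. Otherwise $M$ is non-parabolic, and I would combine the lower bound $g(o,x)\gtrsim d(o,x)^{2-2m}$, coming from the Bishop--Gromov volume bound ${\rm{Vol}}(B_o(r))\le Cr^{2m}$, with a lower bound on the energy growth $\int_{B_o(r)}e_{f^*c_1(L,h)}\,dV$ forced by differentiable non-degeneracy, so as to conclude that $\int_M g(o,x)\,e_{f^*c_1(L,h)}(x)\,dV(x)$ still diverges.

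The main obstacle is precisely this transient sub-case of $(b)$: the Li--Yau bound controls $g$ from below only by a power of the distance, so the divergence must be produced by the slow decay of the energy density rather than by recurrence, and the delicate point is to extract from differentiable non-degeneracy a quantitative lower bound on $\int_{B_o(r)}e_{f^*c_1(L,h)}\,dV$. The guiding model is the standard chart $\mathbb{C}^m\hookrightarrow\mathbb{P}^m$, where $e_{f^*c_1(L,h)}\sim c\,d(o,x)^{-2}$ and the potential diverges only logarithmically, which shows how tight the estimate must be. A secondary subtlety, relevant to $(a)$, is reconciling the pluripotential definition of parabolicity used in the statement with the recurrence of Brownian motion (equivalently $g\equiv+\infty$) that the potential-theoretic argument actually requires.
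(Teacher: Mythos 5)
Your reduction of the statement to the divergence of the potential $\tfrac{1}{2}\int_M g(o,x)\,e_{f^*c_1(L,h)}(x)\,dV(x)$ by monotone convergence is fine, but the way you close case $(a)$ contains a genuine error, not the ``secondary subtlety'' you label it as. The hypothesis in the lemma is the \emph{pluripotential} Liouville property (no nonconstant bounded plurisubharmonic functions), and this is strictly weaker than recurrence of Brownian motion: $\mathbb C^{m}$ with $m\geq 2$ admits no nonconstant bounded plurisubharmonic function (restrict to complex lines and apply the one-variable Liouville theorem for subharmonic functions), yet its Green kernel $g(o,x)=\int_0^\infty p(s,o,x)\,ds$ is finite. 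So the implication ``parabolic $\Rightarrow g(o,\cdot)\equiv+\infty$'' that your argument rests on is false for the notion of parabolicity actually used, and your proof of $(a)$ breaks down precisely in the transient case. The paper's route is different: it embeds $N\hookrightarrow\mathbb P^k(\mathbb C)$ using $L>0$, compares $\tilde T_f(t,L)$ with $\tilde T_f(t,H_N)$ up to multiplicative constants, and writes $\tilde T_f(t,H_N)=\frac14\mathbb E_o\big[\int_0^t\Delta_M u(X_s)\,ds\big]$ for the explicit nonconstant plurisubharmonic function $u=\log(|w_0\circ f|^2+\cdots+|w_k\circ f|^2)$; the divergence is then deduced from the non-existence of nonconstant bounded plurisubharmonic functions applied to $u$ (if the limit were finite, the Green potential of $\Delta_M u$ would produce such a function), with no recurrence of $X_t$ required.

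Your case $(b)$ is openly incomplete: in the transient sub-case you need a quantitative lower bound on $\int_{B_o(r)}e_{f^*c_1(L,h)}\,dV$ extracted from differentiable non-degeneracy, and you acknowledge this as ``the main obstacle'' without supplying it. Differentiable non-degeneracy only gives positivity of the energy density on a dense open set and imposes no decay rate at infinity, so this step cannot be waved through; and your proposed fallback to case $(a)$ in the recurrent sub-case does not help, since the transient sub-case is exactly where both halves of your argument are missing. The paper avoids the issue entirely: ${\rm{Ric}}_M\geq0$ is used only to guarantee that $M$ has no nonconstant bounded plurisubharmonic functions, so that $(b)$ reduces verbatim to $(a)$. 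I would either adopt that reduction or actually prove the energy lower bound; as written, neither part of your proof is complete.
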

\begin{proof}  Since $L>0,$ we can identify $N$ with an algebraic subvariety of $\mathbb P^k(\mathbb C)$ for some integer $k>0.$
Let $\mathscr H_N$ be the restriction of hyperplane line bundle $\mathscr H$ over $\mathbb P^k(\mathbb C)$ to $N.$  Note that
\begin{equation}\label{wdgg}
  C_1\tilde{T}_f(t,\mathscr H_N)\leq\tilde{T}_f(t,L)\leq C_2\tilde{T}_f(t,\mathscr H_N)
\end{equation}
for some constants $C_1,C_2>0.$  Let $[w_0:\cdots:w_k]$ stand for the homogeneous coordinate system of $\mathbb P^k(\mathbb C).$ Assuming  $w_0\circ f\not\equiv0$ without loss of generality. Then
$$u:=\log(|w_0\circ f|^2+\cdots+|w_k\circ f|^2)$$
is a nonconstant subharmonic function on $M$.  Since $f$ is differentiably non-degenerate, then  $(a)$   implies that
$$\tilde{T}_f(t,\mathscr H_N)=\frac{1}{4}\mathbb E_o\left[\int_0^t\Delta_M u(X_s)ds\right]\rightarrow\infty$$
as $t\rightarrow\infty.$ By (\ref{wdgg}),  we have $(a)$ holds.
$(b)$ follows from  \cite{at2}, Proposition 7 (ii) (see the details of arguments in  \cite{att}).
\end{proof}
We continue to give conditions guaranteeing   $\tilde N_f(t, D)=0$ for $0< t<\infty$ if $f$ omits $D.$ 
Let $u$ be a nonnegative function on $M.$ Set 
$$\tilde N(t, u)=\lim_{\lambda\rightarrow\infty}\lambda\mathbb P_o\left(\sup_{0\leq s\leq t}u(X_s)>\lambda\right).$$
\begin{lemma}[\cite{at2}]\label{leat} Assume that  the Ricci curvature of $M$ satisfies 
$R_M(x)\geq-c^2r^2(x)-c$
for all $x\in M$ and a constant $c>0,$  where $R_M$ is defined by $(\ref{kappa}).$     If $u$ is a 
nonnegative smooth subharmonic function with
$$\liminf_{r\rightarrow\infty}\frac{1}{r^2}\log\int_{B_o(r)}\Delta_Mu(x)dV(x)<\infty,$$
where  $B_o(r)$ is the geodesic ball  centered at $o$ with radius $r$  in $M,$ and if 
$$\mathbb E_o\left[\int_0^t\Delta_M u(X_s)ds\right]<\infty$$
for $0\leq t<\infty,$ then $\tilde N(t, u)=0$ for $0<t<\infty.$
\end{lemma}

\begin{theorem} Let $(L, h)$ be a positive Hermitian line bundle over $N.$  Assume that  the Ricci curvature of $M$ satisfies 
$R_M(x)\geq-c^2r^2(x)-c$
for all $x\in M$ and a constant $c>0,$ where $R_M$ is defined by $(\ref{kappa}).$   Suppose also that $\tilde T_f(t, L)<\infty$ for $0\leq t<\infty$ and  
$$\liminf_{r\rightarrow\infty}\frac{1}{r^2}\log\int_{B_o(r)}e_{f^*c_1(L, h)}dV(x)<\infty.$$
If $f$ omits $D,$ then $\tilde N_f(t, D)=0$ for $0<t<\infty.$
\end{theorem}
\begin{proof}  The  curvature condition means the stochastically completeness of $M$. Let $(\{U_\alpha\},\{e_\alpha\})$ be a local trivialization covering of $(L,h).$ 
 Write $s_D=\tilde{s}_De_\alpha$   on $U_\alpha$ locally. Then 
$$\Delta_M\log\frac{1}{\|s_D\circ f\|^2}=2e_{f^*c_1(L,h)}-\Delta_M\log|\tilde s_D\circ f|^2.$$
If $f$ omits $D,$ one obtains $\Delta_M\log|\tilde s_D\circ f|=0.$
Notice that $c_1(L,h)>0,$ thus    
 $-\log\|s_D\circ f\|$ is subharmonic if $f$ omits $D.$ Using Lemma \ref{leat}, we have the theorem proved.
\end{proof}

\subsection{First Main Theorem}~

 Assume the same notations as before.
\begin{theorem}\label{first} Let  $M$ be a stochastically complete K\"ahler manifold and   
 $L$  be a  positive line bundle over a  complex projective manifold $N.$    Let $f:M\rightarrow N$ be a holomorphic mapping with  $f(M)\not\subset {\rm{Supp}}(D),$ where  $D\in|L|$ is a given divisor.  If $\tilde T_f(t, L)<\infty$ as $0\leq t<\infty,$ then
$$\tilde{T}_f(t,L)=\tilde{m}_f(t,D)+\tilde{N}_f(t,D)+O(1).$$
\end{theorem}
\begin{proof} Equip  $L$ with  a Hermitian metric $h$ such that $\omega:=c_1(L,h)>0.$ Set
\begin{equation*}\label{stop}
T_\lambda=\inf\left\{t>0: \sup_{0\leq s\leq t}\log\frac{1}{\|s_D\circ f(X_s)\|}>\lambda\right\}.
\end{equation*}
Let $(\{U_\alpha\},\{e_\alpha\})$ be a local trivialization covering of $(L,h)$.
 Write
$s_D=\tilde{s}_De_\alpha$ locally on $U_\alpha.$
 Then
\begin{equation}\label{fb}
  \log\|s_D\circ f\|^2=\log|\tilde{s}_D\circ f|^2+\log(h\circ f).
\end{equation}
Apply Dynkin formula 
 to $\log\|s_D\circ f\|^{-1},$ we get
\begin{eqnarray}\label{cvc}
% \nonumber to remove numbering (before each equation)
   && \mathbb E_o\left[\log\frac{1}{\|s_D\circ f(X_{t\wedge T_{\lambda}})\|}\right] \nonumber \\
&=& \frac{1}{2}\mathbb E_o\left[\int_0^{t\wedge T_\lambda}\Delta_M
\log\frac{1}{\|s_D\circ f(X_{s})\|}ds\right]+\log\frac{1}{\|s_D\circ f(o)\|},
\end{eqnarray}
where $t\wedge T_\lambda=\min\{t, T_\lambda\}.$ Since $\log\|s_D\circ f(X_{s})\|^{-1}$ has no
singularities as $0\leq s\leq T_\lambda$ due to the definition of $T_\lambda,$ it concludes by (\ref{fb}) that
$$\Delta_M
\log\frac{1}{\|s_D\circ f(X_{s})\|}=-\frac{1}{2}\Delta_M\log(h\circ f(X_{s}))$$
as $0\leq s\leq T_\lambda$, where we use a fact that $\log|\tilde{s}_D\circ f|$ is harmonic on $M\setminus f^{-1}(D).$
Hence, (\ref{cvc}) turns to
\begin{eqnarray*}
% \nonumber to remove numbering (before each equation)
  \mathbb E_o\left[\log\frac{1}{\|s_D\circ f(X_{t\wedge T_{\lambda}})\|}\right]
&=&
  -\frac{1}{4}\mathbb E_o\left[\int_0^{t\wedge T_\lambda}\Delta_M\log(h\circ f(X_{s}))ds\right]+O(1).
\end{eqnarray*}
Since $f^*\omega=-dd^c\log(h\circ f),$ then 
\begin{equation}\label{gj}
  e_{f^*\omega}=-2m\frac{dd^c\log(h\circ f)\wedge\alpha^{m-1}}{\alpha^m}=-\frac{1}{2}\Delta_M\log(h\circ f).
\end{equation}
By the monotone convergence theorem, we  see from (\ref{gj}) that
\begin{eqnarray}\label{tt}
% \nonumber to remove numbering (before each equation)
   -\frac{1}{4}\mathbb E_o\left[\int_0^{t\wedge T_\lambda}\Delta_M\log(h\circ f(X_{s}))ds\right]  
   &=&\frac{1}{2}\mathbb E_o\left[\int_0^{t\wedge T_\lambda}e_{f^*\omega}(X_s)ds\right] \\
   &\rightarrow& \tilde{T}_f(t,L)  \nonumber
\end{eqnarray}
as $\lambda\rightarrow\infty$, where we use a fact that $T_\lambda\rightarrow\infty$ $a.s.$ as $\lambda\rightarrow\infty$ since  $f^{-1}(D)$ is polar.
Write the left hand side of  (\ref{cvc}) as two parts: 
$$\mathrm{I}+\mathrm{II}:=\mathbb E_o\left[\log\frac{1}{\|s_D\circ f(X_{t})\|}: t<T_\lambda\right]+
\mathbb E_o\left[\log\frac{1}{\|s_D\circ f(X_{T_{\lambda}})\|}: T_\lambda\leq t\right].$$
Using the monotone convergence theorem,  then  
\begin{equation}\label{mm}
  \mathrm{I}\rightarrow \tilde{m}_f(r,D)
\end{equation}
as $\lambda\rightarrow\infty.$  Moreover, by the definition of $T_\lambda,$ it is trivial to see that 
\begin{equation}\label{nn}
  \mathrm{II}=
\lambda\mathbb P_o\left(\sup_{0\leq s\leq t}\log\frac{1}{\|s_D\circ f(X_s)\|}>\lambda\right)\rightarrow \tilde{N}_f(t,D)
\end{equation}
as $\lambda\rightarrow\infty.$ Combining  (\ref{tt})-(\ref{nn}),
we have the desired result.
\end{proof}

\section{Second Main Theorem and Defect Relation}

\subsection{Logarithmic Derivative Lemma}~

 Let $(M,g)$ be a complete and stochastically complete K\"ahler manifold  of complex dimension $m$, with  the  K\"ahler  form $\alpha$ and the gradient operator $\nabla_M$  associated to $g.$  Let $X_t$ be the Brownian motion in $M$ with generator $\frac{1}{2}\Delta_M,$
started at a fixed  point $o\in M,$ with transition density function $p(t,o,x).$

\begin{lemma}[Calculus Lemma]\label{calculus} Let $k$ be a non-negative function on $M$ so that
 $\mathbb E_o[k(X_{t})]<\infty$ and $\mathbb E_o[\int_0^{t} k(X_s)ds]<\infty$ for $0\leq t<\infty.$
Then for any $\delta>0,$ there exists a set
$E_{\delta}\subseteq [0, \infty)$ of finite Lebesgue measure such that
$$
\mathbb E_o\big{[}k(X_{t})\big{]}\leq
\left(\mathbb E_o\Big{[}\int_0^{t} k(X_s)ds\Big{]}\right)^{1+\delta}$$
holds for  $t\not\in E_{\delta}.$
\end{lemma}
\begin{proof} Set $\gamma(t):=\mathbb E_o[\int_0^{t} k(X_s)ds]$ and $E_\delta:=\{t\in(0,\infty):\gamma'(t)>\gamma^{1+\delta}(t)\},$  then $\gamma'(t)=\mathbb E_o\big{[}k(X_{t})\big{]}.$ The claim  holds for $k\equiv0.$ If $k\not\equiv0$, then we suppose that $\gamma(1)\neq0$ without loss of generality.
Note that
 $$\int_{E_\delta}dt\leq1+\int_1^\infty\frac{\gamma'(t)}{\gamma^{1+\delta}(t)}dt\leq1+\delta^{-1}\gamma^{-\delta}(1)
 <\infty.$$
 This completes the proof.
\end{proof}
Let $\psi$ be a meromorphic function on $M.$ Define
$$\tilde{m}\left(t,\frac{\|\nabla_M\psi\|}{|\psi|}\right)=\mathbb E_o\left[\log^+\frac{\|\nabla_M\psi\|}{|\psi|}(X_t)\right],$$
where 
$$\|\nabla_M\psi\|^2=2\sum_{i,j=1}^mg^{i\bar j}\frac{\partial\psi}{\partial z_i}\overline{\frac{\partial \psi}{\partial  z_j}},$$
in which $(g^{i\bar{j}})$ is the inverse of $(g_{i\bar{j}}).$ 
 Regarding $\psi$  as a meromorphic mapping into $\mathbb P^1(\mathbb C).$ The characteristic function of $\psi$ with respect to the Fubini-Study form $\omega_{FS}$ on  $\mathbb P^1(\mathbb C)$  is defined by
$$\tilde{T}_\psi(t,\omega_{FS}) = \frac{1}{4}\mathbb E_o\left[\int_0^t\Delta_M\log\big{(}1+|\psi(X_s)|^2\big{)}ds\right].$$
Adopting the spherical distance $\|\cdot,\cdot\|$ on  $\mathbb P^1(\mathbb C).$ The proximity function of $\psi$  with respect to
$a\in \mathbb P^1(\mathbb C)$
is defined by
$$\tilde{m}_\psi(t,a)=\mathbb E_o\left[\log\frac{1}{\|\psi(X_t),a\|}\right].$$
Again,  set
$$\tilde{N}_\psi(t,a)=
\lim_{\lambda\rightarrow\infty}\lambda\mathbb P_o\left(\sup_{0\leq s\leq t}\log\frac{1}{\|f(X_s),a\|}>\lambda\right).$$
Using the similar arguments as in the proof of Theorem \ref{first}, we obtain 
$$\tilde{T}_\psi(t,\omega_{FS})=\tilde{m}_\psi(t,a)+\tilde{N}_\psi(t,a)+O(1).$$

 Define a singular metric
$$\Phi=\frac{1}{|\zeta|^2(1+\log^2|\zeta|)}\frac{\sqrt{-1}}{4\pi^2}d\zeta\wedge d\bar \zeta$$
on $\mathbb P^1(\mathbb C).$
A direct computation gives that
\begin{equation}\label{ada}
  \int_{\mathbb P^1(\mathbb C)}\Phi=1, \ \ 4m\pi\frac{\psi^*\Phi\wedge\alpha^{m-1}}{\alpha^m}=\frac{\|\nabla_M\psi\|^2}{|\psi|^2(1+\log^2|\psi|)}.
\end{equation}
Set 
$$\tilde{T}_\psi(t,\Phi)=\frac{1}{2}\mathbb E_o\left[\int_0^te_{\psi^*\Phi}(X_s)ds\right],\ \ e_{\psi^*\Phi}(x)=2m\frac{\psi^*\Phi\wedge\alpha^{m-1}}{\alpha^m}.$$
According to (\ref{ada}), we obtain  
\begin{equation}\label{ffww}
  \tilde{T}_\psi(t,\Phi)=\frac{1}{4\pi}\mathbb E_o\left[\int_0^t\frac{\|\nabla_M\psi\|^2}{|\psi|^2(1+\log^2|\psi|)}(X_s)ds\right].
\end{equation}

\begin{lemma}\label{999a}  Let
$\psi$ be a nonconstant meromorphic function on $M$ such that $\tilde{T}_\psi(t,\omega_{FS})<\infty$ as $0\leq t<\infty.$  
Then for any $\delta>0,$ there exists a set
$E_{\delta}\subseteq [0, \infty)$ of finite Lebesgue measure such that
\begin{eqnarray*}
% \nonumber to remove numbering (before each equation)
  \mathbb E_o\left[\log^+\frac{\|\nabla_M\psi\|^2}{|\psi|^2(1+\log^2|\psi|)}(X_{t})\right]\leq  (1+\delta)\log^+\tilde{T}_{\psi}(t,\omega_{FS})+O(1)
\end{eqnarray*}
holds for  $t\not\in E_{\delta}.$
\end{lemma}
\begin{proof} By Jensen inequality
\begin{eqnarray*}
% \nonumber to remove numbering (before each equation)
   \mathbb E_o\left[\log^+\frac{\|\nabla_M\psi\|^2}{|\psi|^2(1+\log^2|\psi|)}(X_{t})\right]   
   &\leq&  \mathbb E_o\left[\log\Big{(}1+\frac{\|\nabla_M\psi\|^2}{|\psi|^2(1+\log^2|\psi|)}(X_{t})\Big{)}\right] \nonumber \\
    &\leq& \log^+\mathbb E_o\left[\frac{\|\nabla_M\psi\|^2}{|\psi|^2(1+\log^2|\psi|)}(X_{t})\right]+O(1). \nonumber
\end{eqnarray*}
Applying  Lemma \ref{calculus} and (\ref{ffww})  to get 
\begin{eqnarray*}
% \nonumber to remove numbering (before each equation)
   && \log^+\mathbb E_o\left[\frac{\|\nabla_M\psi\|^2}{|\psi|^2(1+\log^2|\psi|)}(X_{t})\right]  \\
   &\leq& (1+\delta)\log^+\mathbb E_o\left[\int_0^t\frac{\|\nabla_M\psi\|^2}{|\psi|^2(1+\log^2|\psi|)}(X_{s})ds\right] \\
   &\leq&   (1+\delta)\log^+\int_0^tds\int_Mp(s,o,x)\psi^*\Phi\wedge\alpha^{m-1}+O(1) \\
   &\leq&  (1+\delta)\log^+\int_{\mathbb P^1(\mathbb C)}\tilde N_\psi(t, \zeta)\Phi(\zeta)+O(1) \\
   &\leq&  (1+\delta)\log^+\tilde{T}_{\psi}(t,\omega_{FS})+O(1). \nonumber
\end{eqnarray*}
\end{proof}

\noindent\emph{Proof of Theorem $\ref{log}$.}   
 On the one hand,
\begin{eqnarray*}
% \nonumber to remove numbering (before each equation)
&& \tilde{m}\left(t,\frac{\|\nabla_M\psi\|}{|\psi|}\right) \\
   &=&  \frac{1}{2}\mathbb E_o\left[\log^+\left(\frac{\|\nabla_M\psi\|^2}{|\psi|^2(1+\log^2|\psi|)}(X_t)\big{(}1+\log^2|\psi(X_t)|\big{)}\right)\right]  \\
   &\leq& \frac{1}{2}\mathbb E_o\left[\log^+\frac{\|\nabla_M\psi\|^2}{|\psi|^2(1+\log^2|\psi|)}(X_t)\right]
   +\frac{1}{2}\mathbb E_o\left[\log^+\big{(}1+\log^2|\psi(X_t)|\big{)}\right] \\
   &\leq& \frac{1}{2}\mathbb E_o\left[\log^+\frac{\|\nabla_M\psi\|^2}{|\psi|^2(1+\log^2|\psi|)}(X_{t})\right]  \\
  && +\mathbb E_o\left[\log\Big{(}1+\log^+|\psi(X_t)|+\log^+\frac{1}{|\psi(X_t)|}\Big{)}\right].
\end{eqnarray*}
In which, Lemma \ref{999a} gives that
\begin{eqnarray*}
\mathbb E_o\left[\log^+\frac{\|\nabla_M\psi\|^2}{|\psi|^2(1+\log^2|\psi|)}(X_{t})\right]
 &\leq& (1+\delta)\log^+\tilde{T}_\psi(t,\omega_{FS})+O(1).
\end{eqnarray*}
Moreover, by Jensen inequality
\begin{eqnarray*}
% \nonumber to remove numbering (before each equation)
&&\mathbb E_o\left[\log\Big{(}1+\log^+|\psi(X_t)|+\log^+\frac{1}{|\psi(X_t)|}\Big{)}\right] \\
   &\leq& \log^+\left(\tilde{m}_\psi(t,\infty) +\tilde{m}_\psi(t,0)\right)+O(1)  \\
   &\leq& \log^+\tilde{T}_\psi(t,\omega_{FS})+O(1).
\end{eqnarray*}
Combining the above, we prove the theorem.
\subsection{Second Main Theorem}~

Let $(M, g)$ be a  complete and stochastically complete K\"ahler manifold of complex dimension   $m,$ whose  K\"ahler form  is written as  
 $$\alpha=\frac{\sqrt{-1}}{\pi}\sum_{i,j}g_{i\bar{j}}dz_i\wedge d\bar{z}_j.$$
Then 
 $$\alpha^m=m!\det(g_{i\bar j})\bigwedge_{j=1}^m\frac{\sqrt{-1}}{\pi}dz_j\wedge d\bar z_j.$$
Define the Ricci form $\mathscr{R}_M$ of $M$   by
$$\mathscr{R}_M=-dd^c\log\det(g_{s\bar{t}})=\frac{\sqrt{-1}}{2\pi}\sum_{i,j=1}^mR_{i\bar{j}}dz_i\wedge d\bar{z}_j,
$$ 
 where 
$$R_{i\bar{j}}=-\frac{\partial^2}{\partial z_i\partial \overline{z}_j}\log\det(g_{s\bar{t}}).
$$

A well-known theorem by S. S. Chern asserts that $\mathscr{R}_M$ is a real and closed smooth (1,1)-form, which represents the first Chern class of $M$ in de Rham cohomology group $H^2_{{\rm{DR}}}(M,\mathbb R).$   Let $s_M$ be the scalar curvature of $M,$ then
$$s_M=\sum_{i,j}g^{i\bar j}R_{i\bar j},$$
where
$(g^{i\bar j})$ is the inverse of $(g_{i\bar j}).$ A direct computation yields that 
$$s_M=-\frac{1}{2}\Delta_M\log\det(g_{s\bar t}).$$
\ \ \ \ Let $(L,h)$ be a positive Hermitian  line bundle over a  complex projective  manifold $N$ with $\dim_{\mathbb C}N=n\leq m.$ It defines  a  volume form $\Omega=\wedge^nc_1(L,h)$ on $N.$ Write $D=\sum_{j=1}^qD_j\in |L|$ into  a sum of irreducible components, which is 
of  simple normal crossing type, 
  one  can equip every  $L_{D_j}$  ($1\leq j\leq q$) with a Hermitian
metric such that the induced Hermitian metric on $L=\otimes_{j=1}^qL_{D_j}$ is  $h.$
Taking $s_j\in H^0(N, L_{D_j})$
satisfying $(s_j)=D_j$ and $\|s_j\|<1.$
On $N,$ one can define a singular volume form
\begin{equation}\label{1phi}
  \Phi=\frac{\Omega}{\prod_{j=1}^q\|s_j\|^2}.
\end{equation}
Set
$$f^*\Phi\wedge\alpha^{m-n}=\xi\alpha^m.$$ 
Recall that
$$T_\lambda=\inf\left\{t>0: \sup_{0\leq s\leq t}\log\frac{1}{\|s_D\circ f(X_s)\|}>\lambda\right\}.$$
Introduce
 $$\tilde N_{f', D}(t, 0)=\lim_{\lambda\rightarrow\infty} \mathbb E_o\left[\log^-\frac{f^*\Omega\wedge\alpha^{m-n}}{\alpha^m}(X_{T_\lambda}): T_\lambda\leq t\right]. 
$$
Let   $J_f$ denote the set of  points in $M$ such that $f$ is differentiably degenerate, i.e., the rank  of Jacobian matrix of $f$ is not full. 
Notice that $T_\lambda\rightarrow0$ $a.s.$ as $\lambda\rightarrow\infty$ and the image of  $f$  approaches $D$ infinitely $a.s.$  as $\lambda\rightarrow\infty,$ thus one  sees that $\tilde N_{f', D}(t, 0)$ 
 measures the size of  $J_f\cap f^{-1}(D)$ counting multiplicities.   $\tilde N_{f',D}(t, 0)$ may be divergent unless certain curvature conditions are imposed. 

\begin{lemma}\label{lemma} If $\tilde N_f(t, D)+\mathbb E_o\big{[}\log^+\xi(X_{t})\big{]}<\infty$ for $0\leq t<\infty,$ then 
   \begin{eqnarray*}
&& \tilde T_f(t,L)+\tilde T_f(t, K_N)+\tilde T(t, \mathscr R_M)  \\
&\leq& \tilde N_f(t, D)-\tilde N_{f', D}(t, 0)+\frac{1}{2}\mathbb E_o\big{[}\log\xi(X_{t})\big{]}+O(1).
   \end{eqnarray*}
\end{lemma}
\begin{proof} Since   
$$\Delta_M\log \|s_D\circ f(X_t)\|^2=\Delta_M\log h\circ f(X_t)$$
as $0\leq t\leq T_\lambda,$ then 
  it yields from  Dynkin formula  that 
   \begin{eqnarray}\label{A1}
&& \frac{1}{2}\mathbb E_o\big{[}\log\xi(X_{t\wedge T_\lambda})\big{]}  \\
&=& \frac{1}{4}\mathbb E_o\left[\int_0^{t\wedge T_\lambda}\Delta_M\log\xi(X_s)ds\right]+O(1) \nonumber \\
&=&  \frac{1}{4}\mathbb E_o\left[\int_0^{t\wedge T_\lambda}4m\frac{dd^c[\log\xi]\wedge\alpha^{m-n}}{\alpha^m}(X_s)ds\right]+O(1) \nonumber \\
&=& \tilde T_f(t\wedge T_\lambda, L)+\tilde T_f(t\wedge T_\lambda, K_N)+\tilde T(t\wedge T_\lambda, \mathscr R_M)+O(1). \nonumber
 \end{eqnarray}
On the other hand, 
 \begin{eqnarray*}
 \mathbb E_o\big{[}\log\xi(X_{t\wedge T_\lambda})\big{]} 
&=& \mathbb E_o\big{[}\log\xi(X_{t}): t<T_\lambda\big{]}+ \mathbb E_o\big{[}\log\xi(X_{T_\lambda}): T_\lambda\leq t\big{]} \\
&\leq& \mathbb E_o\big{[}\log^+\xi(X_{t}): t<T_\lambda\big{]}+ \mathbb E_o\big{[}\log\xi(X_{T_\lambda}): T_\lambda\leq t\big{]},
 \end{eqnarray*}
 where 
 \begin{eqnarray*}
&& \mathbb E_o\big{[}\log\xi(X_{T_\lambda}): T_\lambda\leq t\big{]} \\
&=&  \mathbb E_o\left[\log\frac{1}{\|s_D\circ f(X_{T_\lambda})\|^2}: T_\lambda\leq t\right]+ 
\mathbb E_o\left[\log\frac{f^*\Omega\wedge\alpha^{m-n}}{\alpha^m}(X_{T_\lambda}): T_\lambda\leq t\right] \\
&\leq& 2\lambda\mathbb P_o\left(\sup_{0\leq s\leq t}\log\frac{1}{\|s_D\circ f(X_s)\|}>\lambda\right)+
\mathbb E_o\left[\log\frac{f^*\Omega\wedge\alpha^{m-n}}{\alpha^m}(X_{T_\lambda}): T_\lambda\leq t\right].
 \end{eqnarray*}
 Thus, 
  \begin{eqnarray*}
&& \frac{1}{2}\mathbb E_o\big{[}\log\xi(X_{t\wedge T_\lambda})\big{]} \\
&\leq& \lambda\mathbb P_o\left(\sup_{0\leq s\leq t}\log\frac{1}{\|s_D\circ f(X_s)\|}>\lambda\right)
+\frac{1}{2}\mathbb E_o\big{[}\log^+\xi(X_{t}): t<T_\lambda\big{]} \\
&&
+\frac{1}{2}\mathbb E_o\left[\log\frac{f^*\Omega\wedge\alpha^{m-n}}{\alpha^m}(X_{T_\lambda}): T_\lambda\leq t\right]. 
 \end{eqnarray*}
Since  $\|s_D\|<1$ and $T_\lambda\rightarrow\infty$ $a.s.$ as $\lambda\rightarrow\infty,$
  it follows from the monotone convergence theorem that 
   \begin{eqnarray*}
&&\lim_{\lambda\rightarrow\infty} \left[\lambda\mathbb P_o\left(\sup_{0\leq s\leq t}\log\frac{1}{\|s_D\circ f(X_s)\|}>\lambda\right)
+\frac{1}{2}\mathbb E_o\big{[}\log^+\xi(X_{t}): t<T_\lambda\big{]}\right] \\
&=& \tilde N_f(t, D)+\frac{1}{2}\mathbb E_o\big{[}\log^+\xi(X_{t})\big{]}
 \end{eqnarray*}
 and 
 $$\lim_{\lambda\rightarrow\infty} \mathbb E_o\left[\log\frac{f^*\Omega\wedge\alpha^{m-n}}{\alpha^m}(X_{T_\lambda}): T_\lambda\leq t\right]=
 -\tilde N_{f', D}(t, 0). 
$$
Therefore,
  \begin{equation}\label{A2}
   \lim_{\lambda\rightarrow\infty} \frac{1}{2}\mathbb E_o\big{[}\log\xi(X_{t\wedge T_\lambda})\big{]}\leq \tilde N_f(t, D)
   -\tilde N_{f', D}(t, 0)+\frac{1}{2}\mathbb E_o\big{[}\log^+\xi(X_{t})\big{]}. 
  \end{equation}
Combining (\ref{A1}) and (\ref{A2}) with conditions, we get  
 \begin{eqnarray}\label{A3}
&&\lim_{\lambda\rightarrow\infty}\left[\tilde T_f(t\wedge T_\lambda, L)+\tilde T_f(t\wedge T_\lambda, K_N)+\tilde T(t\wedge T_\lambda, \mathscr R_M)\right] \\
&\leq& \tilde N_f(t, D)-\tilde N_{f',D}(t, 0)+\frac{1}{2}\mathbb E_o\big{[}\log^+\xi(X_{t})\big{]}+O(1)<\infty.\nonumber
 \end{eqnarray}
Apply  Lebesgue's control convergence theorem to (\ref{A3}), we have the desired result.
  \end{proof}

\noindent\emph{Proof of  Theorem $\ref{nonpositive}$.}
Follow Ru-Wong's arguments (see \cite{ru}, pp. 231-233; see also \cite{No}), 
there exists  a finite  open covering $\{U_\lambda\}$ of $N$ and rational functions
$w_{\lambda1},\cdots,w_{\lambda n}$ on $N$ for every  $\lambda$ such  that $w_{\lambda1},\cdots,w_{\lambda n}$ are holomorphic on $U_\lambda$
and  
$$ \ \ \ \ \ dw_{\lambda1}\wedge\cdots\wedge dw_{\lambda n}(y)\neq0, \ \  ^\forall y\in U_{\lambda}; $$
$$U_{\lambda}\cap D=\big{\{}w_{\lambda1}\cdots w_{\lambda h_\lambda}=0\big{\}}, \ \ ^\exists h_{\lambda}\leq n.$$
In addition, we  can require  $L_{D_j}|_{U_\lambda}\cong U_\lambda\times \mathbb C$ for $\lambda,j.$ On  $U_\lambda,$ we have
$$\Phi=\frac{e_\lambda}{|w_{\lambda1}|^2\cdots|w_{\lambda h_{\lambda}}|^2}
\bigwedge_{k=1}^n\frac{\sqrt{-1}}{2\pi}dw_{\lambda k}\wedge d\bar w_{\lambda k},$$
where $\Phi$ is given by (\ref{1phi}) and $e_\lambda$ is a smooth positive function. 
Let $\{\phi_\lambda\}$ be a partition of unity subordinate to $\{U_\lambda\},$ then $\phi_\lambda e_\lambda$ is bounded on $N.$ Set 
$$\Phi_\lambda=\frac{\phi_\lambda e_\lambda}{|w_{\lambda1}|^2\cdots|w_{\lambda h_{\lambda}}|^2}
\bigwedge_{k=1}^n\frac{\sqrt{-1}}{2\pi}dw_{\lambda k}\wedge d\bar w_{\lambda k}.$$
Put $f_{\lambda k}=w_{\lambda k}\circ f$, then on  $f^{-1}(U_\lambda)$ we obtain
 \begin{equation}\label{56q}
   f^*\Phi_\lambda=
   \frac{\phi_{\lambda}\circ f\cdot e_\lambda\circ f}{|f_{\lambda1}|^2\cdots|f_{\lambda h_{\lambda}}|^2}
   \bigwedge_{k=1}^n\frac{\sqrt{-1}}{2\pi}df_{\lambda k}\wedge d\bar f_{\lambda k}.
 \end{equation}
Set 
$$f^*\Phi_\lambda\wedge\alpha^{m-n}=\xi_\lambda\alpha^m,$$
 then we have $\xi=\sum_\lambda\xi_\lambda.$ Again, set
\begin{equation}\label{gtou}
  f^*c_1(L,h)\wedge\alpha^{m-1}=\varrho\alpha^m.
\end{equation}
 Then 
\begin{equation}\label{d444}
  \varrho=\frac{1}{2m}e_{f^*\omega}.
\end{equation}
For each $\lambda$ and any $x\in f^{-1}(U_{\lambda}),$
take a  local holomorphic coordinate system $z$ around $x.$
Since $\phi_{\lambda}\circ f\cdot e_\lambda\circ f$ is bounded,  it is not very hard to see from (\ref{56q}) and (\ref{gtou}) that $\xi_\lambda$ is
 bounded from above by $P_\lambda,$
where $P_\lambda$ is a polynomial in
$$\varrho, \ \ g^{i\overline{j}}\frac{\partial f_{\lambda k}}{\partial z_i}\overline{\frac{\partial f_{\lambda k}}{\partial z_j}}\Big{/}|f_{\lambda k}|^2, \ \ 1\leq i, j\leq m, \ 1\leq k\leq n.$$
It yields that
$$\log^+\xi_\lambda\leq O\Big(\log^+\varrho+\sum_k\log^+\frac{\|\nabla_M f_{\lambda k}\|}{|f_{\lambda k}|}\Big)+O(1).$$
Thus, 
\begin{eqnarray}\label{fill}
% \nonumber to remove numbering (before each equation)
   \log^+\xi  &\leq& O\Big(\log^+\varrho+\sum_{k,\lambda}\log^+\frac{\|\nabla_M f_{\lambda k}\|}{|f_{\lambda k}|}\Big)+O(1). \nonumber
\end{eqnarray}
By this with Theorem \ref{log} 
\begin{eqnarray*}
% \nonumber to remove numbering (before each equation)
   & & \frac{1}{2}\mathbb E_o\big{[}\log\xi(X_{t})\big{]} \\
   &\leq& O\Bigg(\sum_{k,\lambda}\mathbb E_o\left[\log^+\frac{\|\nabla_M f_{\lambda k}\|}{|f_{\lambda k}|}(X_{t})\right]\Bigg)
   +O\Big{(}\mathbb E_o\left[\log^+\varrho(X_{t})\right]\Big{)}+O(1) \\
   &\leq& O\Bigg(\sum_{k,\lambda}\tilde{m}\left(t,\frac{\|\nabla_M f_{\lambda k}\|}{|f_{\lambda k}|}\right)\Bigg)
   +O\Big{(}\log^+\mathbb E_o\left[\varrho(X_{t})\right]\Big{)}+O(1) \\
   &\leq& O\Big{(}\sum_{k,\lambda}\log\tilde{T}_{f_{\lambda k}}(t, \omega_{FS})\Big{)}+O\Big{(}\log^+\mathbb E_o\left[\varrho(X_{t})\right]\Big{)}+O(1) \\
   &\leq& O\big{(}\log^+\tilde{T}_f(t,L)\big{)}+O\Big{(}\log^+\mathbb E_o\left[\varrho(X_{t})\right]\Big{)}+O(1).
  \end{eqnarray*}
Moreover, Lemma \ref{calculus} and (\ref{d444}) imply that 
\begin{eqnarray*}
% \nonumber to remove numbering (before each equation)
\log^+\mathbb E_o\big{[}\varrho(X_{t})\big{]}&\leq& (1+\delta)\log^+\mathbb E_o\left[\int_0^{t}\varrho(X_s)ds\right] \\
   &=& \frac{(1+\delta)}{2m}\log^+\mathbb E_o\left[\int_0^{t}e_{f^*c_1(L,h)}(X_s)ds\right] \\
   &\leq& \frac{(1+\delta)}{m}\log^+\tilde{T}_f(t,L)+O(1).
\end{eqnarray*}
Combining the above with Lemma \ref{lemma}, we prove the theorem.

\subsection{Defect Relation}~

 Let $L_1, L_2$ be  holomorphic line bundles over  a  complex projective manifold $N.$ Define 
$$\underline{\left[\frac{c_1(L_2)}{c_1(L_1)}\right]}=\sup\big{\{}a\in\mathbb R: L_2>aL_1 \big{\}}, \ \ \overline{\left[\frac{c_1(L_2)}{c_1(L_1)}\right]}=\inf\big{\{}a\in\mathbb R: L_2<aL_1 \big{\}}.$$
It is clear that
\begin{equation}\label{yyy}
  \underline{\left[\frac{c_1(L_2)}{c_1(L_1)}\right]}\leq\liminf_{t\rightarrow\infty}\frac{\tilde{T}_f(t,L_2)}{\tilde{T}_f(t,L_1)}
\leq\limsup_{r\rightarrow\infty}\frac{\tilde{T}_f(t,L_2)}{\tilde{T}_f(t,L_1)}\leq\overline{\left[\frac{c_1(L_2)}{c_1(L_1)}\right]}.
\end{equation}

 Let  $M$ be a complete and stochastically complete K\"ahler manifold with $\dim_{\mathbb C}M\geq \dim_{\mathbb C}N,$ and let  $(L, h)$ be a positive Hermitian line bundle over $N.$ 
  For 
  $f:M\rightarrow N,$ a differentiably non-degenerate  holomorphic mapping such that $\tilde{T}_f(t,L)\rightarrow\infty$ as $t\rightarrow\infty,$
 we define the \emph{defect} of $f$ with respect to $D$   by
$$\tilde{\delta}_f(D)=1-\limsup_{t\rightarrow\infty}\frac{\tilde{N}_f(t,D)}{\tilde{T}_f(t,L)}.$$

\begin{theorem}[Defect relation]\label{defect} Assume the same conditions as in Theorem $\ref{nonpositive}$
 and $\tilde{T}_f(t,L)\rightarrow\infty$ as $t\rightarrow\infty.$
Then
$$\tilde{\delta}_f(D)\leq
\overline{\left[\frac{c_1(K^*_N)}{c_1(L)}\right]}-\underline{\left[\frac{\mathscr{R}_M}{f^*c_1(L)}\right]}.$$
\end{theorem}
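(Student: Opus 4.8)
The plan is to feed the Second Main Theorem (Theorem \ref{nonpositive}) into the definition of $\tilde{\Theta}_f(D)$ and divide by the characteristic function $\tilde{T}_f(t,L)$, which tends to $\infty$ by hypothesis. First I would rearrange the inequality of Theorem \ref{nonpositive} as
$$\overline{\tilde{N}}_f(t,D) \geq \tilde{T}_f(t,L) + \tilde{T}_f(t,K_N) + \tilde{T}(t,\mathscr{R}_M) - O\big(\log\tilde{T}_f(t,L)\big) - O(1),$$
valid for $t>1$ outside a set $E_\delta\subset(1,\infty)$ of finite Lebesgue measure. Dividing by $\tilde{T}_f(t,L)$ and using $\tilde{T}_f(t,L)\to\infty$, the two error terms become $o(1)$, so that for $t\notin E_\delta$
$$\frac{\overline{\tilde{N}}_f(t,D)}{\tilde{T}_f(t,L)} \geq 1 + \frac{\tilde{T}_f(t,K_N)}{\tilde{T}_f(t,L)} + \frac{\tilde{T}(t,\mathscr{R}_M)}{\tilde{T}_f(t,L)} - o(1).$$

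Next I would control the two ratios on the right by the comparison estimate (\ref{yyy}). Since $c_1(K_N)=-c_1(K_N^*)$, additivity of the characteristic function in the form variable gives $\tilde{T}_f(t,K_N) = -\tilde{T}_f(t,K_N^*) + O(1)$, so that (\ref{yyy}) applied to $L_2=K_N^*$, $L_1=L$ yields
$$\liminf_{t\to\infty}\frac{\tilde{T}_f(t,K_N)}{\tilde{T}_f(t,L)} = -\limsup_{t\to\infty}\frac{\tilde{T}_f(t,K_N^*)}{\tilde{T}_f(t,L)} \geq -\overline{\left[\frac{c_1(K_N^*)}{c_1(L)}\right]}.$$
For the Ricci term I would establish the form-level analogue of (\ref{yyy}) for the pair $(\mathscr{R}_M,\,f^*c_1(L))$: because $\tilde{T}(t,\eta)=\frac{1}{2}\mathbb{E}_o[\int_0^t e_\eta(X_s)ds]$ is linear and order-preserving in $\eta$ (as $e_\eta\geq0$ whenever $\eta\geq0$), any $a$ with $\mathscr{R}_M > a\,f^*c_1(L)$ forces $\tilde{T}(t,\mathscr{R}_M)\geq a\,\tilde{T}_f(t,L)+O(1)$, and taking the supremum over such $a$ gives
$$\liminf_{t\to\infty}\frac{\tilde{T}(t,\mathscr{R}_M)}{\tilde{T}_f(t,L)} \geq \underline{\left[\frac{\mathscr{R}_M}{f^*c_1(L)}\right]}.$$

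Finally I would pass to the limit through $t\notin E_\delta$. Since $E_\delta$ has finite measure its complement is unbounded, so that $\limsup_{t\to\infty,\,t\notin E_\delta}$ is bounded above by the full $\limsup$ defining $\tilde{\Theta}_f(D)$, while each $\liminf$ restricted to $t\notin E_\delta$ is at least the corresponding full $\liminf$ estimated above. Using $\liminf(X+Y)\geq\liminf X+\liminf Y$ together with $\limsup\geq\liminf$, the displayed inequality gives
$$\limsup_{t\to\infty}\frac{\overline{\tilde{N}}_f(t,D)}{\tilde{T}_f(t,L)} \geq 1 - \overline{\left[\frac{c_1(K_N^*)}{c_1(L)}\right]} + \underline{\left[\frac{\mathscr{R}_M}{f^*c_1(L)}\right]},$$
and subtracting from $1$ produces exactly the asserted bound on $\tilde{\Theta}_f(D)$. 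The main obstacle I expect is the bookkeeping around the finite-measure exceptional set together with the form-level comparison for $\mathscr{R}_M$: one must check that restricting $\limsup$/$\liminf$ to the complement of $E_\delta$ moves the estimates in the favourable direction, and that the order-preserving property of $\tilde{T}(t,\cdot)$ legitimately upgrades the current inequality $\mathscr{R}_M > a\,f^*c_1(L)$ to the characteristic-function inequality used above.
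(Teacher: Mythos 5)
Your proposal is correct and follows essentially the same route as the paper: the paper's own proof simply rearranges the Second Main Theorem inequality, divides by $\tilde{T}_f(t,L)$, and lets $t\to\infty$ using the comparison (\ref{yyy}) and its analogue for $\mathscr{R}_M$. You spell out the bookkeeping around the exceptional set and the form-level comparison, which the paper leaves implicit, but the argument is the same.
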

\begin{proof} It follows from  Theorem \ref{nonpositive} that
$$1-\frac{\tilde{N}_f(t,D)}{\tilde{T}_f(t,L)}
   \leq \frac{\tilde{T}_f(t,K_N^*)}{\tilde{T}_f(t,L)}-\frac{\tilde{T}(t,\mathscr{R}_M)}{\tilde{T}_f(t,L)}.
   $$
Let $t\rightarrow\infty,$ then we have the theorem proved.
\end{proof}
\begin{cor}\label{ppoo} Assume the same conditions as in Theorem $\ref{defect}.$
 If  ${\rm{Ric}}_M\geq0,$  then
$$\tilde{\delta}_f(D)\leq
\overline{\left[\frac{c_1(K^*_N)}{c_1(L)}\right]}.$$
\end{cor}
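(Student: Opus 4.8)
The plan is to obtain Corollary \ref{ppoo} from Theorem \ref{defect} by showing that the hypothesis $s_M\geq 0$ forces the curvature term $\underline{\left[\frac{\mathscr{R}_M}{f^*c_1(L)}\right]}$ to be nonnegative, so that it may simply be discarded from the right-hand side of the defect relation. Equivalently, and perhaps more transparently, I would return to the inequality actually produced inside the proof of Theorem \ref{defect}, namely $1-\overline{\tilde{N}}_f(t,D)/\tilde{T}_f(t,L)\leq \tilde{T}_f(t,K^*_N)/\tilde{T}_f(t,L)-\tilde{T}(t,\mathscr{R}_M)/\tilde{T}_f(t,L)$, and drop the last term once it is seen to be nonnegative.

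The one computational step is to identify the energy density $e_{\mathscr{R}_M}$ with the Ricci scalar curvature. Since $\mathscr{R}_M=-dd^c\log\det(g_{s\overline{t}})$ and, by the same local computation recorded in Section 2.1 for $\Delta_M\log|\tilde{s}\circ f|^2$, one has $dd^c\varphi\wedge\alpha^{m-1}=\frac{1}{4m}(\Delta_M\varphi)\,\alpha^m$ for any smooth $\varphi$, it follows that
\[
e_{\mathscr{R}_M}=2m\frac{\mathscr{R}_M\wedge\alpha^{m-1}}{\alpha^m}=-\frac{1}{2}\Delta_M\log\det(g_{s\overline{t}})=2s_M ,
\]
where the final equality uses $s_M=-\frac{1}{4}\Delta_M\log\det(g_{s\overline{t}})$. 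Hence the assumption $s_M\geq 0$ is exactly $e_{\mathscr{R}_M}\geq 0$ pointwise on $M$.

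From the definition $\tilde{T}(t,\mathscr{R}_M)=\frac{1}{2}\mathbb{E}_o\left[\int_0^t e_{\mathscr{R}_M}(X_s)\,ds\right]=\mathbb{E}_o\left[\int_0^t s_M(X_s)\,ds\right]$, nonnegativity of the integrand gives $\tilde{T}(t,\mathscr{R}_M)\geq 0$ for every $t>0$; dividing by $\tilde{T}_f(t,L)>0$ and letting $t\to\infty$ yields $\liminf_{t\to\infty}\tilde{T}(t,\mathscr{R}_M)/\tilde{T}_f(t,L)\geq 0$, which is precisely the quantity subtracted in the defect relation. Substituting this into Theorem \ref{defect} (or dropping the term directly as above) gives $\tilde{\Theta}_f(D)\leq\overline{\left[\frac{c_1(K^*_N)}{c_1(L)}\right]}$. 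I expect no genuine obstacle here: the mathematical content is entirely in the identity $e_{\mathscr{R}_M}=2s_M$. The only point deserving care is interpretive—the symbol $\underline{[\mathscr{R}_M/f^*c_1(L)]}$ is introduced in the paper through positivity of line bundles over $N$, whereas here it must be read as $\liminf_{t\to\infty}\tilde{T}(t,\mathscr{R}_M)/\tilde{T}_f(t,L)$; the scalar hypothesis $s_M\geq 0$ controls precisely this averaged quantity rather than the pointwise positivity of the $(1,1)$-form $\mathscr{R}_M$, which is why the direct route through Theorem \ref{nonpositive} is the cleanest way to justify the conclusion.
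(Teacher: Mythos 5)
Your proposal is correct, and it reaches the conclusion by a more careful route than the paper does. The paper's own proof is one line: it asserts that $s_M=-\tfrac{1}{4}\Delta_M\log\det(g_{i\overline{j}})\geq0$ implies $\mathscr{R}_M=-dd^c\log\det(g_{i\overline{j}})\geq0$, and then invokes Theorem \ref{defect}. Read literally as a statement about the $(1,1)$-form, that implication is suspect when $m\geq 2$: nonnegativity of the trace of a Hermitian form does not give positive semidefiniteness of the form itself. What the defect relation actually requires, however, is only that the averaged quantity $\tilde{T}(t,\mathscr{R}_M)$ be nonnegative, and that is exactly what you prove via the identity $e_{\mathscr{R}_M}=2m\,\mathscr{R}_M\wedge\alpha^{m-1}/\alpha^m=-\tfrac{1}{2}\Delta_M\log\det(g_{s\overline{t}})=2s_M$, which is consistent with the paper's own computation $\tilde{T}(t,\mathscr{R}_M)=\mathbb{E}_o\bigl[\int_0^t s_M(X_s)\,ds\bigr]$ appearing later in Section 4.3. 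So your argument isolates the correct mechanism (only the trace of $\mathscr{R}_M$ against $\alpha$ enters the characteristic function), whereas the paper's phrasing shortcuts through an apparently stronger pointwise positivity claim that is not needed and not justified. Your closing interpretive remark about how $\underline{\left[\mathscr{R}_M/f^*c_1(L)\right]}$ must be read as $\liminf_{t\to\infty}\tilde{T}(t,\mathscr{R}_M)/\tilde{T}_f(t,L)$ is also apt, since that symbol is only formally defined in the paper for pairs of line bundles on $N$.
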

\begin{proof} Since  ${\rm{Ric}}_M\geq0,$ then 
$$\underline{\left[\frac{\mathscr{R}_M}{f^*c_1(L)}\right]}\geq0.$$ 
This proves the corollary.
\end{proof}
\begin{cor}\label{} Let $D_j\in |L|$ for $1\leq  j\leq q$ such that $\sum_{j=1}^qD_j$ is of   simple normal crossing type.
 Assume the same conditions as in Theorem $\ref{defect}.$ If $s_M\geq0,$
then
$$\sum_{j=1}^q\tilde{\delta}_f(D_j)\leq
\frac{1}{q}\overline{\left[\frac{c_1(K^*_N)}{c_1(L)}\right]}.$$
\end{cor}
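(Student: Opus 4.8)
The plan is to handle the $q$ components as a single divisor fed into the Second Main Theorem, rather than estimating each $\tilde\Theta_f(D_j)$ in isolation. Put $D=\sum_{j=1}^q D_j$; since every $D_j\in|L|$, the combined divisor lies in $|L^{\otimes q}|$ and is reduced with simple normal crossings by assumption. Two structural facts set up the reduction. First, the characteristic function is additive in the bundle, so $\tilde T_f(t,L^{\otimes q})=q\,\tilde T_f(t,L)+O(1)$, and the slope bracket scales accordingly, $\overline{[c_1(K^*_N)/c_1(L^{\otimes q})]}=\tfrac1q\,\overline{[c_1(K^*_N)/c_1(L)]}$. Second, because the $D_j$ cross normally, the pairwise intersections of the $f^*D_j$ are analytic of complex codimension $\ge2$ and are invisible to the codimension-one integral defining the truncated counting function; hence $\overline{\tilde N}_f(t,D)=\sum_{j=1}^q\overline{\tilde N}_f(t,D_j)$.

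I would then apply Theorem \ref{nonpositive} with $L$ replaced by $L^{\otimes q}$ and with the divisor $D$. The hypothesis $s_M\ge0$ gives $\mathscr R_M=-dd^c\log\det(g_{i\overline j})\ge0$, so $\tilde T(t,\mathscr R_M)=\tfrac12\mathbb E_o[\int_0^t e_{\mathscr R_M}(X_s)\,ds]\ge0$ and this term may be dropped. Using the two identities above together with $\tilde T_f(t,K_N)=-\tilde T_f(t,K^*_N)$, the Second Main Theorem becomes
\begin{equation*}
q\,\tilde T_f(t,L)-\tilde T_f(t,K^*_N)\le\sum_{j=1}^q\overline{\tilde N}_f(t,D_j)+O\big(\log\tilde T_f(t,L)\big)+O(1)
\end{equation*}
for $t$ outside a set of finite Lebesgue measure. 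Since $\tilde T_f(t,L)\to\infty$ by Lemma \ref{condition}, I would divide by the relevant characteristic function, let $t\to\infty$, use (\ref{yyy}) to bound the $K^*_N$-ratio by the bracket, and split the counting terms through $\limsup_t\sum_j(\cdots)\le\sum_j\limsup_t(\cdots)$.

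The decisive point, and the main obstacle, is which characteristic function normalizes the defects. Dividing the displayed inequality by $\tilde T_f(t,L)$ — the normalization dictated by $D_j\in|L|$ — makes the two factors of $q$ cancel (one from $\tilde T_f(t,L^{\otimes q})=q\tilde T_f(t,L)$, one from the $q$ summands), leaving $\sum_j\tilde\Theta_f(D_j)\le\overline{[c_1(K^*_N)/c_1(L)]}$ with no surviving $\tfrac1q$. To expose the factor $\tfrac1q$ one must instead read the inequality as the defect relation for the single divisor $D\in|L^{\otimes q}|$, i.e. divide by $\tilde T_f(t,L^{\otimes q})$ and invoke Corollary \ref{ppoo} over $L^{\otimes q}$ to get $\tilde\Theta_f(D)\le\tfrac1q\overline{[c_1(K^*_N)/c_1(L)]}$. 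The gap then lives entirely in the passage from $\tilde\Theta_f(D)$ to the component sum: the simple-normal-crossing additivity of $\overline{\tilde N}$ combined with superadditivity of $\limsup$ yields only $\tfrac1q\sum_j\tilde\Theta_f(D_j)\le\tilde\Theta_f(D)$, so reaching the stated conclusion requires the sharper estimate $\sum_j\tilde\Theta_f(D_j)\le\tilde\Theta_f(D)$. Establishing that refinement — and reconciling it with the degenerate configuration in which $f$ omits every $D_j$, where each $\tilde\Theta_f(D_j)=1$ — is exactly where I expect the difficulty to concentrate, since it is precisely this step that controls whether the constant on the right should read $\tfrac1q\overline{[\cdots]}$ or $\overline{[\cdots]}$.
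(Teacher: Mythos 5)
Your reduction is exactly the one the paper intends: in fact the paper prints this corollary with \emph{no proof at all}, expecting it to follow from Corollary \ref{ppoo} (equivalently Theorem \ref{defect}) applied to the total divisor $D=\sum_{j=1}^q D_j\in|L^{\otimes q}|$, which is precisely your setup. Your honest bookkeeping of the normalizations then shows why that derivation cannot produce the printed inequality: it gives $\tilde{\Theta}_f(D)\leq\frac{1}{q}\overline{\left[\frac{c_1(K^*_N)}{c_1(L)}\right]}$ for the single divisor $D$ (defect normalized by $\tilde{T}_f(t,L^{\otimes q})=q\,\tilde{T}_f(t,L)+O(1)$), while the inequality $\overline{\tilde{N}}_f(t,D)\leq\sum_j\overline{\tilde{N}}_f(t,D_j)$ (only ``$\leq$'' is needed here, and it always holds) together with subadditivity of $\limsup$ yields $\frac{1}{q}\sum_{j}\tilde{\Theta}_f(D_j)\leq\tilde{\Theta}_f(D)$; combining the two gives $\sum_{j}\tilde{\Theta}_f(D_j)\leq\overline{\left[\frac{c_1(K^*_N)}{c_1(L)}\right]}$, with no surviving factor $\frac{1}{q}$.

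The ``sharper estimate'' you isolate as the obstacle, $\sum_j\tilde{\Theta}_f(D_j)\leq\tilde{\Theta}_f(D)$, is not merely difficult: it is false, and the corollary as printed is false with it. Take $M=\mathbb{C}$, $N=\mathbb{P}^1(\mathbb{C})$, $L=\mathcal{O}(1)$, $q=2$, $D_1=\{0\}$, $D_2=\{\infty\}$, $f(z)=e^z$. Then $s_M=0$ and $\mathscr{R}_M=0$; the energy density $e_{f^*c_1(L,h)}$ is bounded, so $\tilde{T}_f(t,L)<\infty$ for finite $t$ by Lemma \ref{condition1}, and $\tilde{T}_f(t,L)\to\infty$ by Lemma \ref{condition}; $f$ is differentiably non-degenerate and omits both points, so $\tilde{\Theta}_f(D_1)=\tilde{\Theta}_f(D_2)=1$. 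The left side is then $2$, while the claimed bound is $\frac{1}{2}\overline{\left[\frac{c_1(K^*_{\mathbb{P}^1})}{c_1(\mathcal{O}(1))}\right]}=1$. So the factor $\frac{1}{q}$ in the paper's statement is a misprint: it should multiply the left-hand side (equivalently, be deleted from both sides), giving $\sum_j\tilde{\Theta}_f(D_j)\leq\overline{\left[\frac{c_1(K^*_N)}{c_1(L)}\right]}$ --- which is exactly what your argument proves, and which is consistent with the paper's subsequent corollary on hypersurfaces, $\sum_j d_j\Theta_f(D_j)\leq n+1$. In short: your proof is correct for the corrected statement, your diagnosis of where the normalization forces the $\frac{1}{q}$ to cancel is right, and you should not search for the missing refinement, since none can exist.
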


\begin{cor}\label{} Let $D_1,\cdots,D_q$ be hypersurfaces in $\mathbb P^n(\mathbb C)$ of degree $d_1,\cdots,d_q$ such that $\sum_{j=1}^qD_j$ is of   simple
normal crossing type. Let $f:M\rightarrow \mathbb P^n(\mathbb C)$  be a differentiably
non-degenerate holomorphic mapping  such  that $\tilde{T}_f(t,\omega_{FS})<\infty$ for $0<t<\infty.$  If $s_M\geq0$  and 
$$\mathbb E_o\left[\int_0^ts_M(X_s)ds\right]<\infty$$
for $0<t<\infty,$ then
$$\sum_{j=1}^qd_j\tilde\delta_f(D_j)\leq n+1.$$
\end{cor}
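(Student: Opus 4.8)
The plan is to reduce the statement to Corollary~\ref{ppoo} by packaging the $q$ hypersurfaces into a single divisor in one positive line bundle, and then to reconcile the normalizations. Since $N=\mathbb P^n(\mathbb C)$, write $H=\mathcal O(1)$ for the hyperplane bundle, so that $c_1(H)=\omega_{FS}$ and $D_j\in|\mathcal O(d_j)|$. Put $d=\sum_{j=1}^q d_j$ and $L=\mathcal O(d)$; then $L>0$, and since the $D_j$ are distinct with $\sum_j D_j$ of simple normal crossing type, $D:=\sum_{j=1}^q D_j$ is a reduced member of $|L|$ with simple normal crossings, which is exactly the input demanded by Corollary~\ref{ppoo}.

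Next I would verify the hypotheses feeding Corollary~\ref{ppoo}. By linearity of $\omega\mapsto\tilde T_f(t,\omega)$ one has $\tilde T_f(t,L)=d\,\tilde T_f(t,\omega_{FS})+O(1)$, so $\tilde T_f(t,\omega_{FS})<\infty$ on $0<t<\infty$ gives $\tilde T_f(t,L)<\infty$ there. The hypothesis $s_M\ge0$ yields $\mathscr R_M=-dd^c\log\det(g_{i\overline j})\ge0$, hence ${\rm{Ric}}_M\ge0$, and Lemma~\ref{condition}(b) then forces $\tilde T_f(t,L)\to\infty$ as $t\to\infty$. Using $e_{\mathscr R_M}=2m\,\mathscr R_M\wedge\alpha^{m-1}/\alpha^m=2s_M$ gives $\tilde T(t,\mathscr R_M)=\mathbb E_o[\int_0^t s_M(X_s)\,ds]$, so the imposed integrability makes $|\tilde T(t,\mathscr R_M)|<\infty$ on $0<t<\infty$. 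As $f$ is differentiably non-degenerate, Corollary~\ref{ppoo} applies; since $K_N^*=\mathcal O(n+1)$ and $L=\mathcal O(d)$ are proportional positive classes, $\overline{[c_1(K_N^*)/c_1(L)]}=(n+1)/d$, whence
$$\tilde\Theta_f(D)\le\frac{n+1}{d}=\frac{n+1}{\sum_{j=1}^q d_j}.$$

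It remains to turn this single-divisor estimate into the weighted sum. Under simple normal crossings the supports ${\rm{Supp}}\,f^*D_j$ meet pairwise in codimension $\ge2$, so the $(m-1)$-dimensional integral defining the truncated counting function splits as $\overline{\tilde N}_f(t,D)=\sum_{j=1}^q\overline{\tilde N}_f(t,D_j)$; moreover $\tilde T_f(t,\mathcal O(d_j))=d_j\tilde T_f(t,\omega_{FS})+O(1)$, so with the natural normalization $\Theta_f(D_j)=1-\limsup_{t\to\infty}\overline{\tilde N}_f(t,D_j)/(d_j\tilde T_f(t,\omega_{FS}))$ we have $d_j\Theta_f(D_j)=d_j-\limsup_{t\to\infty}\overline{\tilde N}_f(t,D_j)/\tilde T_f(t,\omega_{FS})$. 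Summing over $j$ and invoking $\limsup_t\sum_j(\cdots)\le\sum_j\limsup_t(\cdots)$ gives $\sum_{j=1}^q d_j\Theta_f(D_j)\le d-\limsup_{t\to\infty}\sum_j\overline{\tilde N}_f(t,D_j)/\tilde T_f(t,\omega_{FS})=d\,\tilde\Theta_f(D)\le n+1$, as claimed. The genuinely delicate point is this last reconciliation: each $D_j$ is measured against $d_j\tilde T_f(t,\omega_{FS})$ while the bundled $D$ is measured against $d\,\tilde T_f(t,\omega_{FS})$, and the inequality runs the right way only because $\limsup$ is subadditive, so collecting the divisors can only decrease the total defect, never inflate it. Establishing the additivity of $\overline{\tilde N}_f$ across the crossing components is the other spot where the simple normal crossing hypothesis does real work.
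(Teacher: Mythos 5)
Your proposal is correct and follows the same route as the paper: the paper's own proof checks $s_M\geq 0\Rightarrow \mathscr{R}_M\geq0\Rightarrow \tilde T_f(t,L)\to\infty$ via Lemma \ref{condition}, identifies $\tilde T(t,\mathscr{R}_M)=\mathbb E_o[\int_0^t s_M(X_s)ds]\in[0,\infty)$, and then declares the conclusion ``immediate'' from $c_1(K^*_{\mathbb P^n(\mathbb C)})=(n+1)[\omega_{FS}]$ and $c_1(L_{D_j})=d_j[\omega_{FS}]$ --- which is exactly the bundling $D=\sum_j D_j\in|\mathcal O(\sum_j d_j)|$ and renormalization step you spell out. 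Your write-up simply makes explicit the two points the paper leaves implicit (the subadditivity $\overline{\tilde N}_f(t,D)\leq\sum_j\overline{\tilde N}_f(t,D_j)$ together with subadditivity of $\limsup$, which is all that is actually needed there), so it matches the paper's argument in substance.
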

\begin{proof} Since $s_M\geq0$ implies that $\mathscr R_M\geq0,$ then it follows $R_M\geq0.$ Thus, it yields from Lemma \ref{condition} that $\tilde{T}_f(t,L)\rightarrow\infty$
as $t\rightarrow\infty$.  Furthermore, 
\begin{eqnarray*}
% \nonumber to remove numbering (before each equation)
   0\leq \tilde{T}(t,\mathscr R_M)&=&-\frac{1}{4}\mathbb E_o\left[\int_0^{t}\Delta_M \log\det(g_{i\overline{j}})(X_s)ds\right] \\
&=& \frac{1}{2}\mathbb E_o\left[\int_0^ts_M(X_s)ds\right]<\infty
\end{eqnarray*}
and  
$$c_1(K^*_{\mathbb P^n(\mathbb C)})=(n+1)[\omega_{FS}], \ \ \  c_1(L_{D_j})=d_j[\omega_{FS}].$$  
Hence, we have the corollary proved.
\end{proof}

\begin{cor}\label{}  Let $a_1,\cdots,a_q$ be distinct  points in a compact Riemann surface $S$ of genus $g.$ Let $f:M\rightarrow S$
be a differentiably non-degenerate holomorphic mapping such  that $\tilde{T}_f(t,L_{a_1})<\infty$
 for $0<t<\infty.$  If $s_M\geq0$ and 
$$\mathbb E_o\left[\int_0^ts_M(X_s)ds\right]<\infty$$ for $0<t<\infty,$ then
$$\sum_{j=1}^q\tilde\delta(a_j)\leq 2-2g.$$
\end{cor}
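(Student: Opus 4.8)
The plan is to specialize the Second Main Theorem (Theorem \ref{nonpositive}) to the one-dimensional target $N=S$, exactly in the spirit of the preceding corollary for $\mathbb P^n(\mathbb C)$, and then convert the resulting global inequality into a statement about individual defects. First I would record the line-bundle bookkeeping. Each point $a_j$ is a reduced divisor $D_j=a_j$ of degree one, and since the $a_j$ are distinct points on a curve the divisor $D=\sum_{j=1}^q a_j$ has (trivially) only simple normal crossings; set $L=L_D=\mathcal O_S(D)$, so that $\deg L=q$. On a compact Riemann surface $H^2_{{\rm{dR}}}(S,\mathbb R)\cong\mathbb R$ and the characteristic function $\tilde T_f(t,\cdot)$ depends on the bundle only through its Chern class up to a bounded term (as noted after Theorem \ref{first}); hence, writing $T(t):=\tilde T_f(t,L_{a_1})$, one has $\tilde T_f(t,L_{a_j})=T(t)+O(1)$ for every $j$, together with $\tilde T_f(t,L)=q\,T(t)+O(1)$ and $\tilde T_f(t,K_N)=(2g-2)\,T(t)+O(1)$ (valid with either sign of $2g-2$, since $\deg K_S=2g-2$).

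Next I would verify the hypotheses of Theorem \ref{nonpositive} and that $T(t)\to\infty$. The assumption $\tilde T_f(t,L_{a_1})<\infty$ for $0<t<\infty$ gives $\tilde T_f(t,L)<\infty$; the scalar-curvature condition, via the identity $\tilde T(t,\mathscr R_M)=\mathbb E_o[\int_0^t s_M(X_s)\,ds]$ already used in the previous corollary, gives $|\tilde T(t,\mathscr R_M)|<\infty$; and $s_M\geq0$ yields $\tilde T(t,\mathscr R_M)\geq0$. Since $s_M\geq0$ supplies the curvature positivity required by Lemma \ref{condition}, the differentiable non-degeneracy of $f$ forces $\tilde T_f(t,L)\to\infty$, i.e. $T(t)\to\infty$.

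The heart of the argument is then to feed these values into the Second Main Theorem. Substituting into
$$\tilde T_f(t,L)+\tilde T_f(t,K_N)+\tilde T(t,\mathscr R_M)\leq\overline{\tilde N}_f(t,D)+O\big(\log\tilde T_f(t,L)\big)+O(1),$$
using $\overline{\tilde N}_f(t,D)=\sum_{j=1}^q\overline{\tilde N}_f(t,a_j)$ (the points are disjoint) and discarding the nonnegative term $\tilde T(t,\mathscr R_M)\geq0$ from the left-hand side, I obtain
$$(q+2g-2)\,T(t)\leq\sum_{j=1}^q\overline{\tilde N}_f(t,a_j)+O\big(\log T(t)\big)+O(1).$$
Dividing by $T(t)$, letting $t\to\infty$ so that the $O(\log T(t))/T(t)$ error vanishes, and using the subadditivity of $\limsup$ together with $\tilde T_f(t,L_{a_j})=T(t)+O(1)$, this becomes $q+2g-2\leq\sum_{j=1}^q\big(1-\tilde\Theta_f(a_j)\big)$, since $\limsup_{t\to\infty}\overline{\tilde N}_f(t,a_j)/T(t)=1-\tilde\Theta_f(a_j)$. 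Rearranging gives the asserted bound $\sum_{j=1}^q\Theta_f(a_j)\leq 2-2g$.

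The computation is routine once the setup is in place; the only points requiring care — and hence the main (mild) obstacles — are the line-bundle bookkeeping on $S$ (that all the relevant characteristic functions are proportional to the single reference $T(t)$ up to $O(1)$, which rests on $H^2_{{\rm{dR}}}(S,\mathbb R)\cong\mathbb R$ and on the Chern-class dependence of $\tilde T_f$), the verification through Lemma \ref{condition} that $T(t)\to\infty$ so that the logarithmic error term is negligible, and the correct sign $\tilde T(t,\mathscr R_M)\geq0$ coming from $s_M\geq0$.
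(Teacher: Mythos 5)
Your proposal is correct and follows essentially the same route as the paper: the paper's one-line proof ("by $c_1(K^*_S)=(2-2g)c_1(L_{a_1})$") is exactly the Chern-class bookkeeping you carry out, fed into the Second Main Theorem / defect relation with $D=\sum_j a_j$, $\tilde T_f(t,L_D)=q\,\tilde T_f(t,L_{a_1})+O(1)$, the sign $\tilde T(t,\mathscr R_M)=\mathbb E_o[\int_0^t s_M(X_s)\,ds]\geq0$, and Lemma \ref{condition} to ensure $\tilde T_f(t,L)\to\infty$. You simply make explicit the steps the paper leaves implicit via its preceding corollaries.
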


If $M$ is parabolic, namely,  $X_t$ is recurrent, then  we obtain
\begin{theorem}\label{thm89} 
Let $L$ be a positive line bundle over a  complex projective manifold $N.$
   Let   $D\in|L|$ be  of  simple normal
crossing type. 
  Let $f:M\rightarrow N$ be a differentiably
non-degenerate holomorphic mapping.
If
\begin{equation}\label{con1}
  \int_Ms^-_M(x)dV(x)<\infty,
\end{equation}
then 

$(a)$ Let $R_M(x)\geq-cr^2(x)-c$ for a constant $c>0,$ where $R_M$ is defined by $(\ref{kappa}).$
If $f$ has finite energy, i.e., $$E(f):=\int_Me_{f^*c_1(L,h)}(x)dV(x)<\infty,$$ then
$$\tilde{\delta}_f(D)\leq
\overline{\left[\frac{c_1(K^*_N)}{c_1(L)}\right]}+\frac{\int_Ms^-_M(x)dV(x)}{E(f)}.$$

$(b)$ Let  $R_M(x)\geq-k(r(x))$ for a nondecreasing function $k\geq0$ such that
$k(r)/r^2\rightarrow0$ as $r\rightarrow\infty.$  If $(\ref{con2})$ is satisfied and
 $f$ has infinite energy, then
$$\tilde{\delta}_f(D)\leq
\overline{\left[\frac{c_1(K^*_N)}{c_1(L)}\right]}.$$
\end{theorem}
\begin{proof}  From Lemma \ref{condition}, we  note that $\tilde{T}_f(t,L)\rightarrow\infty$
as $t\rightarrow\infty.$
 Ricci curvature assumption implies that $M$ is stochastically complete, and parabolicity assumption implies that
 ratio ergodic theorem holds (see \cite{rev}). Using ratio ergodic theorem, we get
\begin{eqnarray*}
% \nonumber to remove numbering (before each equation)
  \frac{\tilde{T}(t,\mathscr{R}_M)}{\tilde{T}_f(t,L)}&=& \frac{\mathbb E_o\left[\int_0^{t}s_M(X_s)ds\right]}
{\mathbb E_o\left[\int_0^{t}e_{f^*c_1(L,h)}(X_s)ds\right]}
\rightarrow
\frac{\int_M s_M(x)dV(x)}{\int_Me_{f^*c_1(L,h)}(x)dV(x)} \\
&=&\frac{\int_Ms_M(x)dV(x)}{E(f)}<\infty
\end{eqnarray*}
as $t\rightarrow\infty.$ Thus, $\tilde{T}(t,L)<\infty$ for $t<\infty$ and
$$ -\underline{\left[\frac{\mathscr{R}_M}{f^*c_1(L)}\right]}
\leq \frac{\int_Ms^-_M(x)dV(x)}{E(f)}.$$
By Theorem \ref{defect}, $(a)$ follows. For $(b),$ we first note that $\tilde{T}_f(t,L)$ makes sense since Lemma \ref{condition1}.  
 By  ratio ergodic theorem, we see that $(b)$ holds provided with  $E(f)=\infty.$
\end{proof}
If $M$ is  non-parabolic,  namely, $X_t$ is transient, then we obtain
\begin{theorem}\label{} Assume  that $(\ref{con1})$ holds and
$R_M(x)\geq-k(r(x))$ for a nondecreasing function $k\geq0$ satisfying 
$k(r)/r^2\rightarrow0$ as $r\rightarrow\infty.$
Let $L$ be a positive line bundle over a  complex projective manifold $N$
   and  $D\in|L|$ be of  simple normal
crossing type.
  Let $f:M\rightarrow N$ be a differentiably
non-degenerate holomorphic mapping satisfying $(\ref{con2})$ and  $\tilde{T}_f(t,L)\rightarrow\infty$
as $t\rightarrow\infty.$
 Then
$$\tilde{\delta}_f(D)\leq
\overline{\left[\frac{c_1(K^*_N)}{c_1(L)}\right]}.$$
\end{theorem}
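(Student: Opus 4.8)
The plan is to reduce everything to the defect relation of Theorem \ref{defect}, which already yields
$$\tilde{\Theta}_f(D)\leq\overline{\left[\frac{c_1(K^*_N)}{c_1(L)}\right]}-\underline{\left[\frac{\mathscr{R}_M}{f^*c_1(L)}\right]},$$
so it suffices to prove that the Ricci term is nonnegative, i.e. $\underline{[\mathscr{R}_M/f^*c_1(L)]}=\liminf_{t\to\infty}\tilde{T}(t,\mathscr{R}_M)/\tilde{T}_f(t,L)\geq0$. First I would confirm that Theorem \ref{defect} (equivalently Theorem \ref{nonpositive}) applies: the bound $R_M(x)\geq-k(r(x))$ with $k(r)/r^2\to0$ together with $(\ref{con2})$ gives $\tilde{T}_f(t,L)<\infty$ for $0<t<\infty$ by Lemma \ref{condition1}$(c)$, the growth $\tilde{T}_f(t,L)\to\infty$ is assumed, and the finiteness of $\tilde{T}(t,\mathscr{R}_M)$ for finite $t$ comes down to bounding its negative part, which is exactly the estimate carried out next.

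The core step is a lower estimate for the Ricci characteristic. Writing $\tilde{T}(t,\mathscr{R}_M)=\mathbb{E}_o[\int_0^t s_M(X_s)\,ds]=\int_M g_t(o,x)s_M(x)\,dV(x)$ and decomposing $s_M=s^+_M-s^-_M$, one has
$$\tilde{T}(t,\mathscr{R}_M)\geq-\int_M g_t(o,x)s^-_M(x)\,dV(x).$$
Because $M$ is non-parabolic, the Green function $g_\infty(o,x):=\int_0^\infty p(s,o,x)\,ds$ is finite for $x\neq o$ and $g_t(o,x)\nearrow g_\infty(o,x)$ as $t\to\infty$. I would then use $(\ref{con1})$ to show $\int_M g_\infty(o,x)s^-_M(x)\,dV(x)<\infty$: splitting $M=B_o(1)\cup(M\setminus B_o(1))$, on the compact ball $s^-_M$ is bounded while $g_\infty(o,\cdot)$ has an integrable singularity at the pole $o$, and on $M\setminus B_o(1)$ the Li-Yau heat kernel estimates valid under $k(r)/r^2\to0$ (the same ones used in Lemma \ref{condition1}$(c)$) bound $g_\infty(o,\cdot)$, so the integral is dominated by a constant times $\int_M s^-_M\,dV<\infty$.

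Setting $C:=\int_M g_\infty(o,x)s^-_M(x)\,dV(x)<\infty$, monotone convergence gives $\mathbb{E}_o[\int_0^t s^-_M(X_s)\,ds]\leq C$ uniformly in $t$, hence $\tilde{T}(t,\mathscr{R}_M)\geq-C$ for all $t$. Dividing by $\tilde{T}_f(t,L)$ and using $\tilde{T}_f(t,L)\to\infty$ yields $\underline{[\mathscr{R}_M/f^*c_1(L)]}\geq0$, and substituting into Theorem \ref{defect} produces the asserted bound. The hard part will be the integrability claim $\int_M g_\infty(o,x)s^-_M(x)\,dV(x)<\infty$, which relies on a uniform Li-Yau bound for the Green function off the pole under the Ricci condition; once this is secured, the remainder is a routine passage to the limit. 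This is the non-parabolic counterpart of the parabolic argument, with the finite Green function playing the role of the ratio ergodic theorem.
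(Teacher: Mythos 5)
Your argument is correct in outline and ultimately controls the same quantity as the paper, namely the Green potential of the negative part of the scalar curvature, but it gets there by a genuinely different route. The paper splits into the cases $\mathscr{R}_M\geq0$ (where Theorem \ref{defect} applies at once) and $\mathscr{R}_M<0$, and in the latter case bounds $\mathbb E_o\left[\int_0^\infty s_M^-(X_s)ds\right]$ by $m\,\mathbb E_o\left[\int_0^\infty R_M^-(X_s)ds\right]$ via the pointwise inequality $s_M\geq mR_M$, importing the finiteness of the latter from Theorem 22 of \cite{atsuji} as a consequence of non-parabolicity. You instead work uniformly with $s_M^-$, writing $\mathbb E_o\left[\int_0^t s_M^-(X_s)ds\right]=\int_M g_t(o,x)s_M^-(x)dV(x)\leq\int_M g_\infty(o,x)s_M^-(x)dV(x)$ and controlling the last integral by $(\ref{con1})$ plus a bound on the Green function away from the pole; this is more self-contained (no external citation, no detour through $R_M^-$) and it avoids the paper's non-exhaustive dichotomy $\mathscr{R}_M\geq0$ versus $\mathscr{R}_M<0$, which silently omits indefinite Ricci forms. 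Two caveats. First, do not lean on Li--Yau for the boundedness of $g_\infty(o,\cdot)$ on $M\setminus B_o(1)$: integrating the Li--Yau upper bound over all time under a growing lower Ricci bound $-k(r)$ is delicate. The clean justification is the maximum principle: the minimal positive Green function is the increasing limit of Dirichlet Green functions of an exhaustion, each vanishing on the outer boundary, so $g_\infty(o,x)\leq\max_{\partial B_o(1)}g_\infty(o,\cdot)<\infty$ for $x\notin B_o(1)$, while near the pole the singularity is of order $r^{2-2m}$ (logarithmic if $m=1$), hence locally integrable against $dV$, and $s_M^-$ is bounded on the compact ball. Second, like the paper, you only bound the negative part of $\tilde{T}(t,\mathscr{R}_M)$, whereas Theorem \ref{nonpositive} formally requires $|\tilde{T}(t,\mathscr{R}_M)|<\infty$ for finite $t$; neither argument addresses the positive part in the indefinite case, so this is a gap inherited from the source rather than one you introduced.
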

\begin{proof} If $\mathscr R_M\geq0,$ the assertion  follows  from Theorem \ref{defect}.
If $\mathscr R_M<0,$ then  
$$|\tilde{T}(t,\mathscr{R}_M)|=\frac{1}{2}\mathbb E_o\left[\int_0^\infty s_M^-(X_t)dt\right].$$
The non-parabolicity of $M$ implies  that (see \cite{atsuji}, Theorem 22)
$$\mathbb E_o\left[\int_0^\infty R_M^-(X_t)dt\right]<\infty,$$
 By $s_M\geq m R_M,$ we see that  
$$|\tilde{T}(t,\mathscr{R}_M)|=\frac{1}{2}\mathbb E_o\left[\int_0^\infty s^{-}_M(X_t)dt\right]\leq
\frac{m}{2}\mathbb E_o\left[\int_0^\infty R^{-}_M(X_t)dt\right]<\infty.$$
Hence, $\tilde{T}(t,\mathscr{R}_M)$ is bounded. The theorem  follows from Theorem \ref{defect}.
\end{proof}

\noindent\emph{Proof of Theorem $\ref{zzzz}$.}
 ${\rm{Ric}}_M\geq0$  implies that $\tilde{T}_f(t,L)\rightarrow\infty$ 
as $t\rightarrow\infty$ since  Lemma \ref{condition},  and  the energy assumption means  that $\tilde{T}_f(t,L)<\infty$ for $t<\infty$ since Lemma \ref{condition1}.
By $\mathscr R_M\geq0,$  the theorem follows from Theorem \ref{defect}.

\subsection{The case when $M$ is an  algebraic manifold}~

In Section 4, we obtain an analogue  of  Nevanlinna theory on a wide class of K\"ahler manifolds. 
Sometimes, we are  more concerned  with   domain $M$ which is an algebraic manifold. Consider the algebraic manifold $M:=X\setminus S,$ where $X$ is a complex projective manifold and $S$ is a hypersurface of simple normal crossing type in $X.$ 
Note that $M$ is stochastically complete. 
Let $S=\sum_{j=1}^rS_j$ be a decomposition into irreducible components.  
Taking $\sigma\in H^0(M, L_{S})$ and $\sigma_j\in H^0(M, L_{S_j})$   satisfying  $\sigma=\sigma_1\otimes\cdots\otimes \sigma_r$ and $(\sigma_j)=S_j.$

Assume that  $(L_D, \tau)>0,$ i.e., the Chern form $c_1(L_D, \tau)>0$.  We  
consider situation for the following three typical complete K\"ahler metrics $\alpha$ on $M$ (see  \cite{at2}, pp. 1023), where the Second Main Theorem (Theorem  \ref{nonpositive}) still holds. 

${\rm{(I)}}$ Projective type: $\alpha=dd^c\log\|\sigma\|^{-2}.$ Under this metric, $M$ is parabolic, namely, the Brownian motion is recurrent. However, $M$ is not stochastically complete. Hence,  we cannot ensure  the desired property: $\tilde N_f(t, D)=0$ if $f$ omits $D.$ 

${\rm{(II)}}$ Euclidean type: $\alpha=dd^c\|\sigma\|^{-2}.$ Under this metric, the  Ricci curvature of $M$  is bounded and therefore $\tilde N_f(t, D)=0$ if $f$ omits $D.$ 
 Moreover, $M$ is non-parabolic for $\dim_{\mathbb C}\geq2,$ i.e., the Brownian motion is transient (see \cite{ga}).

${\rm{(III)}}$ Poincar\'e type: $\alpha=Cdd^c\log\|\sigma\|^{-2}-\sum_{j=1}^rdd^c\log(\log\|\sigma_j\|^2)^2.$ The metric was introduced by Cornalba-Griffiths \cite{cg}.  In   this case, 
we can consider a defect relation by choosing a suitable metric $\tau$ and a constant $C.$

\begin{lemma}\label{lemmax}  Assume that $L>0.$ Then there exist a constant $C>0$ and a Hermitian metric $\tau$ on $L$ such that $\alpha$ satisfies the following properties$:$

$(a)$ $\alpha$ is complete$;$

$(b)$ $M$ has finite volume with respect to $\alpha;$

$(c)$ ${\rm{Ric}}_M$ is bounded. More precisely, $-2\alpha\leq \mathscr R_M<0.$
\end{lemma}

In the above lemma, $(b)$ implies the parabolicity of $M;$ $(a)$ and $(c)$ ensures that $\tilde N_f(t, D)=0$ if $f$ omits $D.$ 

\begin{theorem}\label{thm90} Let  $f:M\rightarrow N$  be a differentiably non-degenerate
holomorphic mapping into a complex projective manifold $N$ with $\dim_{\mathbb C}N\leq\dim_{\mathbb C}M,$ where $M=X\setminus S$  is equipped with a complete K\"ahler metric $\alpha$ satisfying the properties of Lemma $\ref{lemmax}.$  Let $L$ be a positive line bundle over $N$. If $f$ satisfies $(\ref{con2}),$  then  
$$\tilde{\delta}_f(D)\leq
\overline{\left[\frac{c_1(K^*_N)}{c_1(L)}\right]}+\frac{4m\cdot {\rm{Vol}}(M)}{E(f)},$$
where $m=\dim_{\mathbb C}M.$
\end{theorem}
\begin{proof} By Lemma \ref{lemmax}, $-2\alpha\leq \mathscr R_M<0.$ It is therefore 
$$s^-_M=-2m\frac{\mathscr R\wedge\alpha^{m-1}}{\alpha^m}\leq 2m\frac{2\alpha\wedge\alpha^{m-1}}{\alpha^m}=4m.$$
According to  Theorem \ref{thm89}, we have the theorem proved.
\end{proof}

\begin{cor} Assume the same condition as in Theorem $\ref{thm90}.$ If $$\int_Me_{f^*c_1(L,h)}\alpha^m=\infty,$$
then 
$$\tilde{\delta}_f(D)\leq
\overline{\left[\frac{c_1(K^*_N)}{c_1(L)}\right]}.$$
\end{cor}

\vskip\baselineskip

\label{lastpage-01}

\vskip\baselineskip
\vskip\baselineskip

\end{document}